\renewcommand{\leq}{\leqslant}
\renewcommand{\geq}{\geqslant}
\renewcommand{\le}{\leqslant}
\renewcommand{\ge}{\geqslant}
\newcommand{\ptl}{\partial}
\newcommand{\rr}{{\mathbb{R}}}
\newcommand{\cc}{{\mathbb{C}}}
\newcommand{\la}{\lambda}
\newcommand{\hh}{{\mathbb{H}}}
\newcommand{\nn}{{\mathbb{N}}}
\newcommand{\sph}{{\mathbb{S}}}
\newcommand{\escpr}[1]{\big<#1\big>}
\newcommand{\Sg}{\Sigma}
\newcommand{\sg}{\sigma}
\newcommand{\Om}{\Omega}
\newcommand{\ga}{\gamma}
\newcommand{\nuh}{\nu_{h}}
\newcommand{\per}{P}
\newcommand{\cf}[1]{\mathbf{1}_{#1}}
\newcommand{\seq}[1]{\{#1_i\}_{i\in\nn}}
\DeclareMathOperator{\divv}{div}
\DeclareMathOperator{\intt}{int}
\DeclareMathOperator{\Isom}{Isom}
\newtheorem{theorem}{Theorem}[section]
\newtheorem{proposition}[theorem]{Proposition}
\newtheorem{lemma}[theorem]{Lemma}
\theoremstyle{definition}
\newtheorem{definition}{Definition} 
\theoremstyle{remark}
\newtheorem{remark}[theorem]{Remark}
\numberwithin{equation}{section}
\begin{document}

\title[Isoperimetric regions in contact sub-Riemannian manifolds]{Existence of isoperimetric regions in contact sub-Riemannian manifolds}

\author[M.~Galli]{Matteo Galli} \address{Departamento de
Geometr\'{\i}a y Topolog\'{\i}a \\
Universidad de Granada \\ E--18071 Granada \\ Espa\~na}
\email{galli@ugr.es}
\author[M.~Ritor\'e]{Manuel Ritor\'e} \address{Departamento de
Geometr\'{\i}a y Topolog\'{\i}a \\
Universidad de Granada \\ E--18071 Granada \\ Espa\~na}
\email{ritore@ugr.es}

\date{\today}

\thanks{Both authors supported by MEC-Feder grant MTM2007-61919}
\subjclass[2000]{53C17, 49Q20, 49Q05} 
\keywords{Sub-Riemannian geometry, contact geometry, isoperimetric regions, isoperimetric profile, Carnot-Carath\'eodory distance}

\begin{abstract}
We prove existence of regions minimizing perimeter under a volume constraint in contact sub-Riemannian manifolds such that their quotient by the group of contact transformations preserving the sub-Riemannian metric is compact.
\end{abstract}

\maketitle

\thispagestyle{empty}

\bibliographystyle{amsplain}

\tableofcontents

\section{Introduction}
Isoperimetric inequalities are valuable tools in Analysis and Geometry. In a given space, an optimal isoperimetric inequality is provided by the isoperimetric profile function, i.e., the one that assigns to any volume $v>0$ the infimum of the perimeter of the sets of volume $v$. Isoperimetric regions are those for which this infimum is achieved. A relevant problem in this field is to analyze if isoperimetric regions exist in a given~space for any value of the volume, or equivalently if, for any fixed volume $v>0$, there~is a perimeter-minimizing set of volume $v$.

To consider this problem notions of volume and perimeter must be given. A~very general class where both can be defined is the one of metric measure spaces, widely studied in probability theory, where the volume is the measure and the perimeter is the classical Minkowski content, defined from the volume and the distance. A recently studied class is the one of Ahlfors-regular metric measure spaces supporting an $1$-Poincar\'e inequality \cite{Mi}, \cite{Am}, where functions of bounded variation and finite perimeter sets can be defined. Riemannian and sub-Riemannian manifolds are included in this class.

Isoperimetric inequalities have been considered in contact sub-Riemannian manifolds. Pansu \cite{MR676380} first proved an isoperimetric inequality of the type $|\ptl\Om|\ge C\,|\Om|^{4/3}$, for a given constant $C>0$, in the first Heisenberg group $\hh^1$. While the exponent $\tfrac{4}{3}$ is optimal, the constant $C$ is not. Pansu conjectured \cite{MR829003} that equality for the optimal constant is achieved by a distinguished family of spheres with constant mean curvature in $\hh^1$.  Chanillo and Yang \cite{MR2548248} recently extended Pansu's inequality to pseudo-hermitian $3$-manifolds without torsion. The interested reader may consult Chapter~8 of the monograph \cite{CDPT} for a detailed account on the isoperimetric inequality in the sub-Riemannian Heisenberg group $\hh^1$.

The problem of existence of isoperimetric regions has been widely considered in Riemannian manifolds. Classical compactness results of Geometric Measure Theory ensure existence in compact manifolds \cite{Gi}, \cite{Si}, \cite{Mo}. However, it is known that there exist complete non-compact Riemannian manifolds for which isoperimetric regions do not exist for any value of the volume, such as planes of revolution with strictly increasing Gauss curvature \cite[Thm.~2.16]{MR1883725}. On the other hand, isoperimetric regions exist for any given volume in complete surfaces with non-negative Gauss curvature \cite{MR1857855}. A very general existence result was stated by F.~Morgan in \cite{Mo} for Riemannnian manifolds which have compact quotient under the action of the isometry group. Its proof is modeled on a previous one of existence of clusters minimizing perimeter under  given volume constraints in Euclidean space \cite{Mo2}.

In sub-Riemannian Geometry, apart from the compact case, the only known existence result has been given by Leonardi and Rigot  for Carnot groups \cite{Le-Ri}. In their paper they made an extensive use of the properties of the isoperimetric profile in a Carnot group $\mathbb{G}$. Since isoperimetric regions in $\mathbb{G}$ are invariant by intrinsic dilations, the isoperimetric profile $I_\mathbb{G}$ of $\mathbb{G}$ is given by $I_\mathbb{G}(v)=Cv^q$, where $C$ is a positive constant and $q\in (0,1)$. In particular, the function $I_\mathbb{G}$ is strictly concave, a property that plays a fundamental role in their proof. Leonardi and Rigot also prove that isoperimetric sets are domains of isoperimetry in the particular case of the Heisenberg group. However, their result cannot be applied to some interesting sub-Riemannian groups, such as the roto-translational one \cite{CDPT}, which are not of Carnot type. Some of the crucial points of the proof of Leonardi and Rigot are discussed in \cite[\S~8.2]{CDPT}.

The aim of this paper is to prove in Theorem~\ref{th:main} an existence result for isoperimetric regions in contact sub-Riemannian manifolds such the quotient by the group of contact isometries, the diffeomorphisms that preserve the contact structure and the sub-Riemannian metric, is compact. This is the analog of Morgan's Riemannian result.

In the proof of Theorem~\ref{th:main} we follow closely Morgan's scheme: we pick a minimizing sequence of sets of volume $v$ whose perimeters approach the infimum of the perimeter of sets of volume $v$. If the sequence subconverges without losing any fraction of the original volume, the lower semicontinuity of the perimeter implies that the limit set is an isoperimetric region of volume $v$. If some fraction of the volume is missing then Proposition~\ref{prop:minseq} implies that the minimizing sequence can be broken into a converging part and a diverging one, the latter composed of sets of uniformly positive volume, see \cite{MR1883725}, \cite{MR1857855} and \cite{Ri-Ro2} for the Riemannian case. The converging part has a limit, which is an isoperimetric region for its volume, and is bounded by Lemma~\ref{lem:bounded}. Hence we can suitably translate the diverging part to recover some of the lost volume. An important point here is that we always recover a fixed fraction of the volume because of Lemma~\ref{lem:lr}, see \cite[Lemma~4.1]{Le-Ri}.

Along the proof of Theorem~\ref{th:main} two important technical points have to be solved, as mentioned in the previous paragraph. We prove in Lemma~\ref{lem:bounded} boundedness of the isoperimetric regions, and a structure result for minimizing sequences in Proposition~\ref{prop:minseq} . The key point to prove boundedness is the Deformation Lemma~\ref{addingperimeter}, where we slightly enlarge a given finite perimeter set producing a variation of perimeter which can be controlled by a multiple of the increase of volume. This is a extremely useful observation of Almgren \cite[V1.2(3)]{MR0420406}, \cite[Lemma~13.5]{Mo}. The Deformation Lemma is the only point where we strongly use the fact that our underlying sub-Riemannian manifold is of contact type, to construct a foliation by hypersurfaces with controlled mean curvature. Our proof of the Deformation Lemma~\ref{addingperimeter} does not seem to generalize easily to more general sub-Riemannian manifolds. The structure result for minimizing sequences appeared for the first time, although it was known to experts in Geometric Measure Theory, in \cite{MR1883725} for Riemannian surfaces, and in \cite{Ri-Ro2} for Riemannian manifolds of any dimension. In some cases, Proposition~\ref{prop:minseq} provides direct proofs of existence of isoperimetric regions.

We have organized this paper into five sections apart from this introduction. In Section~\ref{sec:preliminaries} we recall the necessary preliminaries about contact sub-Riemannian manifolds and metric measure spaces we shall use later. In Section~\ref{sec:rii} we obtain in Lemma~\ref{lem:relative} a relative isoperimetric inequality with uniform constant and radius in any compact set. This inequality is obtained from Jerison-Poincar\'e's inequality in Carnot-Carath\'eodory spaces \cite{Je}. It is then standard to prove Lemma~\ref{lem:isopsmall}, which yields a uniform isoperimetric inequality for small volumes, see also \cite{gromov-cc}. In Section~\ref{sec:deformation} we prove the crucial Deformation Lemma~\ref{addingperimeter} which allows us to deform a finite perimeter set modifying slightly its volume while keeping controlled the change of perimeter in terms of the variation of the volume. Lemma~\ref{addingperimeter} is proven by mapping, using Darboux's Theorem, the foliation by constant mean curvature Pansu's spheres in a punctured neighborhood of the origin in the Heisenberg group $\hh^n$, to our given contact sub-Riemannian manifold. We then prove that the mean curvature of the resulting foliation is bounded and we apply the sub-Riemannian Divergence Theorem to conclude the proof of the result. In Section~\ref{sec:structure} we prove a structure result for minimizing sequences in Proposition~\ref{prop:minseq}, and we state and prove some properties of the isoperimetric profile. Finally, in section~\ref{sec:main} we prove our main result, Theorem~\ref{th:main}, on existence of isoperimetric regions.


\section{Preliminaries}
\label{sec:preliminaries}

A \emph{contact manifold} \cite{Bl} is a $C^\infty$ manifold $M^{2n+1}$ of odd dimension so that there is an one-form $\omega$ such that $d\omega$ is non-degenerate when restricted to $\mathcal{H}:=\text{ker}(\omega)$. Since
\[
d\omega(X,Y)=X(\omega(Y))-Y(\omega(X))-\omega([X,Y]),
\]
the \emph{horizontal distribution} $\mathcal{H}:=\text{ker}(\omega)$ is completely non-integrable. One can easily prove the existence of a unique vector field $T$ in $M$ so that
\begin{equation}
\label{eq:reeb}
\omega(T)=1,\qquad (\mathcal{L}_T\omega)(X)=0,
\end{equation}
where $\mathcal{L}$ is the Lie derivative and $X$ is any smooth vector field on $M$. The vector field $T$ is usually called the \emph{Reeb vector field} of the contact manifold $M$. It is a direct consequence that $\omega\wedge (d\omega)^n$ is an orientation form in $M$.

A well-known example of a contact manifold is the Euclidean space $\rr^{2n+1}$ with the standard contact one-form
\begin{equation}
\label{eq:omega0}
\omega_0:=dt+\sum_{i=1}^n (x_idy_i-y_idx_i).
\end{equation}
A \emph{contact transformation} between contact manifolds is a diffeomorphism preserving the horizontal distributions. A \emph{strict contact transformation} is a diffeomorphism preserving the contact one-forms. A strict contact transformation preserves the Reeb vector fields. Darboux's Theorem \cite[Thm.~3.1]{Bl} shows that, given a contact manifold and some point $p\in M$, there is an open neighborhood $U$ of $p$ and a strict contact transformation $f$ from $U$ into an open set of $\rr^{2n+1}$ with its standard contact structure induced by $\omega_0$. Such a local chart will be called a \emph{Darboux chart}.

The length of a piecewise horizontal curve $\ga:I\to M$ is~defined by
\[
L(\ga):=\int_I |\ga'(t)|\,dt,
\]
where the modulus is computed with respect to the metric $g_\mathcal{H}$. The Carnot-Carath\'eo\-do\-ry distance $d(p,q)$ between $p$, $q\in M$ is defined as the infimum of the lengths of piecewise smooth horizontal curves joining $p$ and $q$. A minimizing geodesic is any curve $\ga:I\to M$ such that $d(\ga(t),\ga(t'))=|t-t'|$ for each $t$, $t'\in I$. We shall say that the sub-Riemannian manifold $(M,g_\mathcal{H},\omega)$ is complete if $(M,d)$ is a complete metric space. By Hopf-Rinow's Theorem \cite[p.~9]{gromov-ms} bounded closed sets are compact and each pair of points can be joined by a minimizing geodesic. From \cite[Chap.~5]{Mon} a minimizing geodesic in a contact sub-Riemannian manifold is a smooth curve that satisfies the geodesic equations, i.e., it is normal.

The metric $g_\mathcal{H}$ can be extended to a Riemannian metric $g$ on $M$ by requiring that $T$ be a unit vector orthogonal to $\mathcal{H}$. The scalar product of two vector fields $X$ and $Y$ with respect to the metric $g$ will be often denoted by $\escpr{X,Y}$. The Levi-Civita connection induced by $g$ will be denoted by $D$. An important property of the metric $g$ is that the integral curves of the Reeb vector field $T$ defined in \eqref{eq:reeb} are \emph{geodesics}, see \cite[Thm.~4.5]{Bl}. To check this property we observe that condition $(\mathcal{L}_T\omega)(X)$ in \eqref{eq:reeb} applied to a horizontal vector field $X$ yields $\omega([T,X])=0$ so that $[T,X]$ is horizontal. Hence, for any horizontal vector field $X$, we have
\[
\escpr{X,D_TT}=-\escpr{D_TX,T}=-\escpr{D_XT,T}=0,
\] 
where in the last equality we have used $|T|=1$. Since we trivially have $\escpr{T,D_TT}=0$, we get $D_TT=0$, as we claimed.

A usual class defined in contact geometry is the one of contact Riemannian manifolds, see \cite{Bl}, \cite{MR1000553}. Given a contact manifold, one can assure the existence of a Riemannian metric $g$ and an $(1,1)$-tensor field $J$ so that
\begin{equation}
\label{eq:contactriem}
g(T,X)=\omega(X), \quad 2g(X,J(Y))=d\omega(X,Y),\quad J^2(X)=-X+\omega(X)\,T.
\end{equation}
The structure given by $(M,\omega,g,J)$ is called a contact Riemannian manifold. The class of contact sub-Riemannian manifolds is different from this one. Recall that, in our definition, the metric $g_\mathcal{H}$ is given, and it is extended to a Riemannian metric $g$ in $TM$. However, there is not in general an $(1,1)$-tensor field $J$ satisfying all conditions in \eqref{eq:contactriem}. Observe that the second condition in \eqref{eq:contactriem} uniquely defines $J$ on $\mathcal{H}$, but this $J$ does not satisfy in general the third condition in \eqref{eq:contactriem}, as it is easily seen in $(\rr^3,\omega_0)$ choosing an appropriate  positive definite metric in $\text{ker}(\omega_0)$.

The Riemannian volume form $dv_g$ in $(M,g)$ coincides with Popp's measure \cite[\S~10.6]{Mon}. The volume of a set $E\subset M$ with respect to the Riemannian metric $g$ will be denoted by $|E|$. 

A \emph{contact isometry} in $(M,g_\mathcal{H},\omega)$ is a strict contact transformation that preserves $g_\mathcal{H}$. Contact isometries preserve the Reeb vector fields and they are isometries of the Riemannian manifold $(M,g)$. The group of contact isometries of $(M,g_\mathcal{H},\omega)$ will be denoted by $\Isom_{\omega}(M,g)$.

It follows from \cite[Thm.~1]{NSW} that, given a compact set $K\subset M$ there are positive constants $\ell$, $L$, $r_0$, such that $M$ is \emph{Ahlfors-regular}
\begin{equation}
\label{eq:ahlfors}
\ell r^Q\leq |B(x,r)|\leq L r^Q,
\end{equation}
for all $x\in K$, $0<r<r_0$. Here $Q$ is the \emph{homogeneous dimension} of $M$, defined as
\begin{equation}
\label{eq_homdimension}
Q:=2n+2.
\end{equation}
Related to the homogeneous dimension we shall also consider the isoperimetric exponent
\begin{equation}
\label{eq:defq}
q:=(Q-1)/Q.
\end{equation}
In the case of contact sub-Riemannian manifolds this result also follows taking Darboux charts. Inequalities \eqref{eq:ahlfors} immediately imply the \emph{doubling property}: given a compact set $K\subset M$, there are positive constants $C$, $r_0$ such that
\begin{equation}
\label{eq:doubling}
|B(x,2r)|\le C |B(x,r)|,
\end{equation}
for all $x\in K$, $0<r<r_0$. Moreover, \eqref{eq:ahlfors} also implies that, given a compact subset $K\subset M$, there are positive constants $C$, $r_0$, such that
\begin{equation}
\label{eq:homogeneity}
\frac{|B(x_0,r)|}{|B(x,s)|}\geq C\bigg(\frac{r}{s}\bigg)^Q,
\end{equation}
for any $x_0\in K$, $x\in B(x_0,r)$, $0<r\le s<r_0$.

Given a Borel set $E\subset M$ and an open set $\Om\subset M$, the \emph{perimeter} of $E$ in $\Omega$ can be~defined, following the Euclidean definition by De Giorgi, by
\begin{equation*}
\per(E,\Omega):=\sup\bigg\{\int_{E}\divv X\, dv_g: X\in \frak{X}_0^1(\Om), X\ \text{horizontal}, |X|\le 1 \bigg\},
\end{equation*}
where $\frak{X}_0^1(\Om)$ is the space of vector fields of class $C^1$ and compact support in $\Om$ and $\divv$ is the divergence in the Riemannian manifold $(M,g)$. When $\Omega=M$ we define $P(E):=\per(E,M)$. A set $E$ is called of \emph{finite perimeter} if $\per(E)<+\infty$, and of \emph{locally finite perimeter} if $P(E,\Om)<+\infty$ for any bounded open subset $\Om\subset M$. See \cite{FSSC4} and \cite{Ga-Nh} for similar definitions.

A function $u\in L^1(M)$ is of \emph{bounded variation} in an open set $\Omega$ if 
\begin{equation*}
|Du|(\Om):=\sup \bigg\{ \int_{\Omega}u \divv X \,dv_g : X\in C_0^1(M), X\ \text{horizontal}, |X|\le 1,\,\text{supp}\,X\subset\Om\bigg\}
\end{equation*}
is finite. We shall say that $|Du|(\Om)$ is the \emph{total variation} of $u$ in $\Om$. The space of functions with bounded variation in $M$ will be denoted by $BV(M)$. If $u$ is a smooth function then
\[
|Du|(\Om)=\int_\Om |\nabla_hu|\,dv_g,
\]
where $\nabla_hu$ is the orthogonal projection to $\mathcal{H}$ of the gradient $\nabla u$ of $u$ in $(M,g)$.

It follows easily that $P(E,\Om)$ is the total variation of the characteristic function $\cf{E}$ of $E$. A sequence of finite perimeter sets $\{E_i\}_{i\in\nn}$ converges to a finite perimeter set $E$ if $\cf{E_i}$ converges to $\cf{E}$ in $L^1_{loc}(M)$.

Finite perimeter sets are defined up to a set of measure zero. We can always choose a representative so that all density one points are included in the set and all density zero points are excluded \cite[Chap.~3]{Gi}. We shall always take such a representative without an explicit mention.

There is a more general definition of functions of bounded variation and of sets of finite perimeter in metric measure spaces, using a relaxation procedure, using as energy functional the $L^1$ norm of the minimal upper gradient, \cite{Mi}, \cite{Am}. If $(M,g_\mathcal{H},\omega)$ is a contact sub-Riemannian manifold then the definition of perimeter given above coincides with the one in \cite{Mi}, \cite{Am}. See \cite[\S~5.3]{Mi}, \cite[Ex.~3.2]{Am}.

In case $E$ has $C^1$ boundary $\Sg$, it follows from the Divergence Theorem in the~Riemannian manifold $(M,g)$ that the perimeter $P(E)$ coincides with the sub-Riemannian area of $\Sg$ defined by
\begin{equation}
\label{eq:area}
A(\Sg):=\int_{\Sg} |N_h|\,d\Sg,
\end{equation}
where $N$ is a unit vector field normal to $\Sg$, $N_h$ the orthogonal projection of $N$ to the horizontal distribution, and $d\Sg$ is the Riemannian measure of $\Sg$.

 The following usual properties for finite perimeter sets $E$, $F\subset M$ in an open set $\Om\subset M$ are proven in \cite{Mi}
\begin{enumerate}
\item $\per(E,\Om)=\per(F,\Om)$ when the symmetric difference $E\triangle F$ satisfies $|(E\triangle F)\cap\Om)|=0$.
\item $\per(E\cup F,\Om)+\per(E\cap F,\Om)\le\per(E,\Om)+\per(F,\Om)$.
\item $\per(E,\Om)=\per(M\setminus E,\Om)$.
\end{enumerate}
The set function $\Om\mapsto P(E,\Om)$ is the restriction to the open subsets of the finite Borel measure $P(E,\cdot\,)$ defined by
\begin{equation}
\label{eq:permeasure}
P(E,B):=\inf\{P(E,A): B\subset A, A\ \text{open}\},
\end{equation}
where $B$ is a any Borel set.

We fix a point $p\in M$ and we consider the open balls $B_r:=B(p,r)$, $r>0$. Then the following property holds from the definitions
\begin{equation}
\label{eq:perinter}
\per(E\cap B_r)\le\per(E,B_r)+\per(E\setminus B_r,\ptl B_r),
\end{equation}
where $P(E\setminus B_r,\ptl B_r)$ is defined from \eqref{eq:permeasure}. 

The following results are proved in general metric measure spaces

\begin{proposition}[Lower semicontinuity \cite{Am},\cite{Mi}] The function $E\rightarrow \per(E,\Omega)$ is lower semicontinuous with respect to the $L^1 (\Om)$ topology.
\end{proposition}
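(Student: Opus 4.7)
The plan is to exploit directly the variational definition of the perimeter. By the definition given above,
\[
\per(E,\Om)=\sup\bigg\{\int_E\divv X\,dv_g : X\in\frak{X}_0^1(\Om),\ X\text{ horizontal},\ |X|\le 1\bigg\},
\]
so $\per(\cdot,\Om)$ is the supremum of the family of functionals
\[
\Phi_X(E):=\int_E\divv X\,dv_g=\int_\Om \cf{E}\,\divv X\,dv_g,
\]
indexed by admissible $X$. A supremum of $L^1(\Om)$-continuous functionals is automatically lower semicontinuous, so the entire argument reduces to showing that each $\Phi_X$ is continuous in $L^1(\Om)$.

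Fix such an admissible horizontal vector field $X$. Since $X\in C^1$ and has compact support in $\Om$, the Riemannian divergence $\divv X$ is a bounded continuous function with compact support, in particular $\divv X\in L^\infty(\Om)$. Hence, if $\seq{E}$ is a sequence of measurable sets with $\cf{E_i}\to\cf{E}$ in $L^1(\Om)$, the elementary estimate
\[
|\Phi_X(E_i)-\Phi_X(E)|\le \|\divv X\|_{L^\infty(\Om)}\int_\Om|\cf{E_i}-\cf{E}|\,dv_g
\]
gives $\Phi_X(E_i)\to \Phi_X(E)$. Combining this with the inequality $\Phi_X(E_i)\le \per(E_i,\Om)$, which holds by definition of perimeter, we obtain
\[
\int_E\divv X\,dv_g=\lim_{i\to\infty}\int_{E_i}\divv X\,dv_g\le\liminf_{i\to\infty}\per(E_i,\Om).
\]

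Finally, taking the supremum on the left-hand side over all admissible $X$ yields $\per(E,\Om)\le\liminf_i\per(E_i,\Om)$, which is exactly the claimed lower semicontinuity. There is essentially no obstacle specific to the sub-Riemannian setting: the proof is identical in form to the classical Euclidean one in Giusti \cite{Gi}, since the only structural ingredients used are the $C^1$ character and compact support of the test vector fields, together with the fact that the sub-Riemannian perimeter is defined as a supremum of integrals against such test fields.
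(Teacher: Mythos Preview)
Your proof is correct and is the standard direct argument using the variational definition of perimeter given in the paper. The paper itself does not supply a proof of this proposition; it merely cites the metric-measure-space references \cite{Am}, \cite{Mi}, where lower semicontinuity comes essentially from the relaxation definition of $BV$ in that general framework. Your argument is more elementary and better adapted to the specific definition adopted here, since it uses nothing beyond the fact that $\per(\cdot,\Om)$ is a supremum of $L^1$-continuous linear functionals; this is exactly the classical proof in \cite{Gi} transplanted verbatim, and no sub-Riemannian subtlety arises.
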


\begin{proposition}[Compactness \cite{Mi}] 
\label{prop:compactness}
Let $\seq{E}$ be a sequence of finite perimeter sets such that $\{\cf{E_i}\}_{i\in\nn}$ is bounded in $L^1_{loc}(M)$ norm and satisfying $\sup_i\per(E_i,\Omega)<+\infty$ for any relatively compact open set $\Om\subset M$. Then there exists a finite perimeter set $E$ in $M$ and a subsequence $\{\cf{E_{n_i}}\}_{i\in\nn}$ converging to $\cf{E}$ in $L^1_{loc}(M)$.
\end{proposition}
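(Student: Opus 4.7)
The plan is to reduce the statement to the standard BV compactness theorem in doubling metric measure spaces admitting a $1$-Poincar\'e inequality. Both ingredients are available in the present setting: the Ahlfors-regularity \eqref{eq:ahlfors} implies the doubling property \eqref{eq:doubling} on any compact set, and Jerison's Poincar\'e inequality in Carnot--Carath\'eodory spaces (invoked by the authors in Section~\ref{sec:rii}) supplies the Poincar\'e inequality on balls. Together these give compactness of the embedding $BV(B_R)\hookrightarrow L^1(B_R)$ on every metric ball $B_R:=B(p,R)$ around a fixed base point $p\in M$, which is the classical Rellich--Kondrachov-type result implemented by Miranda in the metric measure setting.

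First, I would localize. Fix a base point $p\in M$ and use the exhaustion $M=\bigcup_{R>0} B(p,R)$. For each fixed $R>0$, pick a slightly larger ball $B_{R+1}\supset \overline{B_R}$; the sequence $\{\cf{E_i}\}$ is bounded in $L^1(B_{R+1})$ by the assumed $L^1_{loc}$ bound, and its BV-seminorm on $B_{R+1}$ is bounded by $\sup_i\per(E_i,B_{R+1})<+\infty$ by hypothesis. The metric BV compactness theorem then produces a subsequence converging in $L^1(B_R)$ to some limit function $u_R\in L^1(B_R)$.

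Next, a standard diagonal extraction along a sequence $R_k\to\infty$ yields a single subsequence $\{\cf{E_{n_i}}\}_{i\in\nn}$ converging in $L^1_{loc}(M)$ to some $u\in L^1_{loc}(M)$. Extracting a further almost-everywhere convergent subsequence, $u$ is the pointwise limit of $\{0,1\}$-valued functions and hence is itself $\{0,1\}$-valued almost everywhere, so $u=\cf{E}$ for a Borel set $E\subset M$, which I represent so that density-one points are included and density-zero points are excluded. The lower semicontinuity of the perimeter (Proposition preceding the statement) then gives $\per(E,\Omega)\le\liminf_i\per(E_{n_i},\Omega)<+\infty$ for every relatively compact open $\Omega\subset M$, so $E$ has locally finite perimeter and is the desired limit set.

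The only delicate point I expect is the uniformity of the $1$-Poincar\'e inequality over the balls under consideration. In the sub-Riemannian setting one cannot simply cite a global inequality but must work locally in Darboux charts, controlling the Jerison constants uniformly on the compact set containing each $B_R$; this is implicit in the paper's use of the Poincar\'e inequality in Section~\ref{sec:rii}. Once this uniformity is secured, the compactness, the diagonal extraction, and the identification of the $L^1_{loc}$-limit as a characteristic function are all entirely routine.
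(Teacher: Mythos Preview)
The paper does not give its own proof of this proposition; it is simply stated with a citation to Miranda~\cite{Mi}, who proves the BV compactness theorem in general doubling metric measure spaces supporting a $1$-Poincar\'e inequality. Your proposal is therefore not competing with any argument in the paper but is rather a correct sketch of why Miranda's theorem applies in the present contact sub-Riemannian setting and how one passes from the local result to $L^1_{loc}$ convergence via a diagonal extraction. The ingredients you identify---Ahlfors regularity giving doubling on compact sets, and Jerison's inequality (as localized in Lemma~\ref{lem:poincare}) giving the $1$-Poincar\'e inequality on small balls centered in any compact set---are exactly the hypotheses Miranda needs, and your handling of the uniformity issue (working on $\overline{B}_{R+1}$, a compact set, so that the constants are uniform there) is the right one.

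One minor observation: your sketch invokes the Poincar\'e inequality from Section~\ref{sec:rii}, which in the paper's layout comes \emph{after} the statement of the compactness proposition. This is not a logical problem---Lemma~\ref{lem:poincare} does not use Proposition~\ref{prop:compactness}---but it does mean that the paper is treating the proposition as an imported black box rather than as something derived from its own later lemmas. Also note that the conclusion you obtain is that $E$ has \emph{locally} finite perimeter, which matches what the argument actually gives; the paper's phrasing ``finite perimeter set $E$'' should be read in that local sense.
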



\begin{theorem}[Gauss-Green for finite perimeter sets]
Let $E\subset M$ be a set of finite perimeter. Then there exists a $\per(E)$-measurable vector field $\nu_E\in TM$ such that 
\begin{equation*}
-\int_{E}\divv X\,dv_g=\int_{M}g_{\mathcal{H}}(\nu_E,X)\,d\per(E),
\end{equation*}
for all $X\in \mathcal{H}$ and $|\nu_E|=1$ for $\per(E)$-a.e. $x\in M$.
\end{theorem}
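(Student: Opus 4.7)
The plan is to realize the linear functional $X \mapsto -\int_E \divv X \, dv_g$ as integration against a vector-valued Radon measure, and then extract its polar decomposition. Concretely, define $L \colon \mathfrak{X}^1_0(M) \cap \mathcal{H} \to \rr$ by $L(X) := -\int_E \divv X \, dv_g$ on horizontal $C^1$ vector fields with compact support. The definition of $P(E,\Om)$ as a supremum gives the bound $|L(X)| \le P(E,\Om)\,\|X\|_\infty$ whenever $\mathrm{supp}\,X \subset \Om$. Hence $L$ extends by density to a continuous linear functional on the space of compactly supported continuous horizontal sections of $\mathcal{H}$, with local norm controlled by the finite Borel measure $P(E,\cdot)$ defined in \eqref{eq:permeasure}.

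Next, by the Riesz representation theorem for $\mathcal{H}$-valued Radon measures (working in local Darboux charts, where $\mathcal{H}$ is trivialized and the classical vector-valued Riesz theorem applies, and then gluing via a partition of unity), there exists an $\mathcal{H}$-valued Radon measure $\mu_E$ on $M$ such that
\[
L(X) = \int_M g_{\mathcal{H}}(X,\mu_E)
\]
for every compactly supported continuous horizontal vector field $X$. I would then verify that the total variation $|\mu_E|$ coincides with $P(E,\cdot)$: the inequality $|\mu_E|(\Om) \le P(E,\Om)$ follows from the norm bound on $L$, and the reverse inequality follows from the very definition of perimeter, since for any admissible $X$ in the supremum defining $P(E,\Om)$ one has $-\int_E \divv X \, dv_g = \int g_{\mathcal{H}}(X,\mu_E) \le |\mu_E|(\Om)$.

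Now I would apply the Radon-Nikodym/polar decomposition for vector-valued Radon measures to write $\mu_E = \nu_E \, |\mu_E| = \nu_E \, P(E,\cdot)$, where $\nu_E$ is a $P(E)$-measurable section of $\mathcal{H}$ with $|\nu_E| \le 1$ $P(E)$-a.e. Substituting back gives the desired identity
\[
-\int_E \divv X \, dv_g = \int_M g_{\mathcal{H}}(\nu_E, X) \, dP(E)
\]
for every $X \in \mathcal{H}$ (extending from $C^1_0$ to $\mathcal{H}$ by an approximation argument using the horizontal cutoff-regularization available in a contact sub-Riemannian manifold).

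The remaining point, $|\nu_E| = 1$ for $P(E)$-a.e.\ $x$, is the only place requiring a short independent argument. The inequality $|\nu_E| \le 1$ a.e.\ is automatic from the polar decomposition; for the reverse, I would use a Besicovitch-type differentiation argument on the Ahlfors-regular space $(M,d,dv_g)$, whose validity is guaranteed by the doubling property \eqref{eq:doubling}. Testing the integral identity against horizontal vector fields supported in small balls $B(x,r)$ and taking $r \to 0$ at a $P(E)$-Lebesgue point of $\nu_E$, one recovers $|\nu_E(x)|$ as the local ratio $|\mu_E|(B(x,r))/P(E,B(x,r))$, which is $1$ by construction. The main technical subtlety I expect is precisely this passage to the limit in a purely metric-measure setting; but since in contact sub-Riemannian manifolds we have Darboux charts together with Ahlfors regularity, one can reduce locally to the Heisenberg situation where the differentiation of measures is well established.
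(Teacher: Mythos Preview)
Your approach is correct and is exactly what the paper sketches: apply the Riesz Representation Theorem in local (Darboux) coordinates to the linear functional $X\mapsto -\int_E \divv X\,dv_g$, then invoke the polar decomposition, citing \cite{FSSC4} for the Heisenberg case. One remark: your final differentiation step is unnecessary, since the polar decomposition $\mu_E=\nu_E\,|\mu_E|$ of a vector-valued Radon measure already yields $|\nu_E|=1$ for $|\mu_E|$-a.e.\ $x$ (not merely $|\nu_E|\le 1$), so once you have established $|\mu_E|=P(E,\cdot)$ you are done.
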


The proof consists essentially in taking local coordinates and applying Riesz Representation Theorem \cite[\S~1.8]{EG} to the linear functional $f\mapsto -\int f\,\divv_{\mathcal{H}}Xdv_g$, where $f$ is any function with compact support in $M$. This result was proven in the Heisenberg group $\hh^n$ in \cite{FSSC4}.

\begin{definition} Let $E$ be a finite perimeter set. The \emph{reduced boundary} $\partial^* E$ is composed of the points $x\in\partial E$ which satisfy 
\begin{itemize}
\item [(i)] $\per(E,B_r (x))>0$, for all $r>0$;
\item[(ii)] exists $\lim\limits_{r\rightarrow 0}\fint\limits_{B_r(x)}\nu_E d\per(E)$ and its modulus is one.
\end{itemize}
\end{definition}

The following approximation result, whose proof is a straightforward adaptation of the Euclidean one, \cite[Chap.~1]{Gi}, holds.

\begin{proposition} Let $(M,g_\mathcal{H},\omega)$ be a contact sub-Riemannian manifold, and let $u\in BV(\Omega)$. Then there exists a sequence $\seq{u}$ of smooth functions such that $u_i\rightarrow u$ in $L^1(\Omega)$ and $\lim_{i\rightarrow +\infty} |\nabla_h u_i|(\Omega)=|\nabla_h u|(\Omega)$.
\end{proposition}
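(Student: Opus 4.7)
The plan is to adapt the classical Meyers--Serrin / Anzellotti--Giaquinta scheme to the horizontal setting via Darboux charts. First I would cover $\Omega$ by a locally finite family of open sets $\{\Omega_j\}_{j\in\nn}$ with $\overline{\Omega_j}\subset\Omega$, arranged so that each $\Omega_j$ sits inside a Darboux chart identifying it with an open subset of $\rr^{2n+1}$ endowed with the standard contact form $\omega_0$. Fix a smooth partition of unity $\{\varphi_j\}$ subordinate to this cover, and note that in each chart the horizontal distribution $\hhh$ is spanned by smooth vector fields with bounded coefficients on every relatively compact subset.

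Next, fix a tolerance $\delta>0$. For each $j$, transfer $u\varphi_j$ to its Darboux image (extending by zero) and convolve with a Friedrichs mollifier $\eta_\varepsilon$ in $\rr^{2n+1}$ to obtain a smooth function $v_{j,\varepsilon}$, then pull back to $M$. For $\varepsilon$ small enough, $\mathrm{supp}(v_{j,\varepsilon})$ sits in a slightly enlarged, still relatively compact subset of $\Omega$ meeting only finitely many of the $\Omega_k$. The key technical estimate, which follows as in \cite{FSSC4}, is
\[
|\nabla_h v_{j,\varepsilon}|(\Omega)\le |\nabla_h(u\varphi_j)|(\Omega_j^\varepsilon)+C_j(\varepsilon)\,\|u\varphi_j\|_{L^1},
\]
where $\Omega_j^\varepsilon$ is an $\varepsilon$-neighborhood of $\mathrm{supp}(\varphi_j)$ and $C_j(\varepsilon)\to 0$ as $\varepsilon\to 0$. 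This requires controlling the commutator between Euclidean convolution and the variable-coefficient horizontal vector fields in the chart, using the smoothness of these coefficients. Choose $\varepsilon_j>0$ so small that
\[
\|v_{j,\varepsilon_j}-u\varphi_j\|_{L^1(\Omega)}+\bigl||\nabla_h v_{j,\varepsilon_j}|(\Omega)-|\nabla_h(u\varphi_j)|(\Omega)\bigr|<\delta/2^j.
\]

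Then define $u_\delta:=\sum_j v_{j,\varepsilon_j}$, a smooth function because the sum is locally finite. By construction $\|u_\delta-u\|_{L^1(\Omega)}<\delta$. For the horizontal total variation, apply the product rule $\nabla_h(u\varphi_j)=\varphi_j\nabla_h u+u\nabla_h\varphi_j$ together with $\sum_j\nabla_h\varphi_j=0$ and the inequality above to obtain
\[
|\nabla_h u_\delta|(\Omega)\le|\nabla_h u|(\Omega)+\delta.
\]
Taking a sequence $\delta_i\downarrow 0$ gives $u_i:=u_{\delta_i}$ with $u_i\to u$ in $L^1(\Omega)$ and $\limsup_i|\nabla_h u_i|(\Omega)\le|\nabla_h u|(\Omega)$; the reverse inequality comes from the lower semicontinuity proposition cited earlier.

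The main obstacle will be the mollification estimate: the horizontal vector fields in a Darboux chart have non-constant coefficients, so Euclidean convolution does not commute with $\nabla_h$. Careful bookkeeping of the commutator term is needed to ensure that the error $C_j(\varepsilon)$ really does vanish as $\varepsilon\to 0$, which forces the partition of unity to be chosen compatibly with the Darboux covering.
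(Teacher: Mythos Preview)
Your proposal is correct and is exactly the adaptation the paper has in mind: the paper gives no proof at all, merely stating that the argument is ``a straightforward adaptation of the Euclidean one'' in \cite[Chap.~1]{Gi}. One point worth tightening: the passage from the product rule and $\sum_j\nabla_h\varphi_j=0$ to the bound $|\nabla_h u_\delta|(\Omega)\le|\nabla_h u|(\Omega)+\delta$ does not follow from summing the estimates on $|\nabla_h(u\varphi_j)|$ alone, since the cancellation of $\sum_j\nabla_h\varphi_j$ occurs before, not after, taking total variations; as in the Euclidean Anzellotti--Giaquinta proof one must arrange the cover so that one piece carries almost all of $|\nabla_h u|(\Omega)$ while the remaining (telescoping) pieces contribute only an $\varepsilon$-error. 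This is routine bookkeeping in the Euclidean argument and carries over unchanged.
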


The localization lemma \cite[Lemma~3.5]{Am}, see also \cite{Mi}, allows us to prove

\begin{proposition}
Let $(M,g_\mathcal{H},\omega)$ be a contact sub-Riemannian manifold, $E\subset M$ a finite perimeter set, $p\in M$, and $B_r:=B(p,r)$. Then, for almost all $r>0$, the set $E\setminus B_r$ has finite perimeter, and
\[
\per(E\setminus B_r,\ptl B_r)\le\frac{d}{dr}\,|E\cap B_r|.
\]
\end{proposition}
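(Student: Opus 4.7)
My plan is to approximate $\cf{E\setminus B_r}$ in $L^1(M)$ by a one-parameter family of BV functions built from a Lipschitz cutoff of the distance to $p$, and then extract the stated inequality from lower semicontinuity of the perimeter combined with the Leibniz rule for the product of a BV function and a Lipschitz function.

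Set $V(r):=|E\cap B_r|$. Being monotone nondecreasing and bounded by $|E|$, $V$ is differentiable almost everywhere; fix any $r>0$ at which $V'(r)$ exists. For $h>0$ define the Lipschitz function $\phi_h:[0,\infty)\to[0,1]$ by $\phi_h(t)=0$ on $[0,r]$, $\phi_h(t)=1$ on $[r+h,\infty)$, and affine in between, and put
\[
u_h(x):=\cf{E}(x)\,\phi_h(d(p,x)).
\]
Then $u_h\to\cf{E\setminus B_r}$ in $L^1(M)$ as $h\to 0^+$. Since $d(p,\cdot)$ is $1$-Lipschitz with respect to the Carnot--Carath\'eodory distance we have $|\nabla_h d(p,\cdot)|\leq 1$ almost everywhere, and therefore $|\nabla_h(\phi_h\circ d(p,\cdot))|\leq 1/h$ pointwise almost everywhere, vanishing outside the annulus $B_{r+h}\setminus B_r$. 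The Leibniz-type bound supplied by the localization lemma \cite[Lemma~3.5]{Am} then gives
\[
|Du_h|(M)\leq\int_M\phi_h(d(p,\cdot))\,d\per(E)+\frac{V(r+h)-V(r)}{h}.
\]

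Pass to the limit $h\to 0^+$: monotone convergence applied to $\phi_h\nearrow\cf{\{d(p,\cdot)>r\}}$ shows the first term tends to $\per(E,M\setminus\overline{B_r})$, while the second tends to $V'(r)$. Lower semicontinuity of the total variation then yields that $E\setminus B_r$ has finite perimeter with
\[
\per(E\setminus B_r,M)\leq\per(E,M\setminus\overline{B_r})+V'(r).
\]
To identify the left-hand side with the boundary contribution, decompose the perimeter measure of $E\setminus B_r$ over the partition $M=B_r\cup\ptl B_r\cup(M\setminus\overline{B_r})$. Since $E\setminus B_r$ is empty in the open set $B_r$, its perimeter measure vanishes there; since $E\setminus B_r$ coincides with $E$ on the open set $M\setminus\overline{B_r}$, one has $\per(E\setminus B_r,M\setminus\overline{B_r})=\per(E,M\setminus\overline{B_r})$. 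Consequently
\[
\per(E\setminus B_r,M)=\per(E,M\setminus\overline{B_r})+\per(E\setminus B_r,\ptl B_r),
\]
and substituting into the previous inequality gives the desired bound $\per(E\setminus B_r,\ptl B_r)\leq V'(r)$.

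The main technical obstacle is justifying the Leibniz estimate for $|Du_h|$ in the sub-Riemannian setting, where one must control the horizontal gradient of a product of a BV characteristic function with a bounded Lipschitz cutoff. I expect this is precisely the content of the localization lemma the authors invoke in the line directly preceding the proposition, so the argument should go through without introducing new machinery specific to the contact structure.
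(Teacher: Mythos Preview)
Your argument is correct and is precisely the standard derivation the paper has in mind: the authors do not write out a proof at all but simply point to Ambrosio's localization lemma \cite[Lemma~3.5]{Am}, and your Lipschitz-cutoff construction together with the Leibniz estimate and lower semicontinuity is exactly how that lemma yields the stated inequality. One cosmetic remark: the pointwise limit of $u_h$ is $\cf{E\setminus\overline{B_r}}$ rather than $\cf{E\setminus B_r}$, but since $|\ptl B_r|=0$ for almost every $r$ (the level sets of $d(p,\cdot)$ are disjoint), the two agree in $L^1$ at the radii under consideration and nothing changes.
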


%

The \emph{isoperimetric profile} of $M$ is the function $I_M:(0,|M|)\rightarrow \rr^+\cup \{0\}$ given by
\begin{equation*}
I_M(v):=\inf \{\per(E):E\subset M, |E|=v\}.
\end{equation*}
A set $E\subset M$ is an \emph{isoperimetric region} if $P(E)=I_M(|E|)$. The isoperimetric profile must be seen as an optimal isoperimetric inequality in the manifold $M$, since for any set $E\subset M$ we have
\[
P(E)\ge I_M(|E|),
\]
with equality if and only if $E$ is an isoperimetric region.

\section{A relative isoperimetric inequality and an isoperimetric inequality for small volumes}
\label{sec:rii}

In this section we consider a contact sub-Riemannian manifold $(M,g_\mathcal{H},\omega)$. We shall say that $M$ supports a \emph{$1$-Poincar\'e inequality} if there are constants $C_P$, $r_0>0$ such that
\[
\int_{B(p,r)}|u-u_{p,r}|\,dv_g\le C_pr\int_{B(p,r)} |\nabla_hu|\,dv_g
\]
holds for every $p\in M$, $0<r<r_0$, and $u\in C^\infty(M)$. Here $u_{p,r}$ is the average value of the function $u$ in the ball $B(p,r)$ with respect to the measure $dv_g$
\[
u_{p,r}:=\frac{1}{|B(p,r)|}\,\int_{B(p,r)} u\,dv_g,
\]
that will also be denoted by
\[
\fint_{B(x,r)} u\,dv_g
\]
We shall prove that a $1$-Poincar\'e inequality holds in $M$ provided $M/\Isom_\omega(M,g)$ is compact, using the following result by Jerison

\begin{theorem}[{\cite[Thm.~2.1]{Je}}]
Let $X_1$, $\ldots$, $X_m$ be $C^\infty$ vector fields satisfying H\"ormander's condition defined on a neighborhood $\Om$ of the closure $\overline{E}_1$ of the Euclidean unit ball $E_1\subset\rr^d$.

Then there exists constants $C>0$, $r_0>0$ such that, for any $x\in E_1$ and every $0<r<r_0$ such that $B(x,2r)\subset \Om$,
\begin{equation}
\label{eq:jerison}
\int_{B(x,r)} |f-\tilde{f}_{x,r}|\,d\mathcal{L}\leq Cr\int_{B(x,r)} \bigg(\sum_{i=1}^m X_i(f)^2\bigg)^{1/2}\,d\mathcal{L},
\end{equation}
for any $f\in C^\infty(\overline{B}(x,r))$, where the integration is taken with respect to the Lebesgue measure $\mathcal{L}$, the balls are computed with respect to the Carnot-Carath\'eodory distance associated to $X_1$, $\ldots$, $X_m$, and $\tilde{f}_{x,r}$ is the mean with respect to Lebesgue measure.
\end{theorem}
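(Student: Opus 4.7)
My plan is to establish the Poincar\'e inequality \eqref{eq:jerison} via a representation-formula approach, reducing the general H\"ormander system to a free nilpotent setting where Folland's explicit fundamental solution of the sub-Laplacian is available.

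First, I would apply the Rothschild-Stein lifting theorem to embed $\Omega$ as a slice of a larger manifold $\widetilde{\Omega}\subset \rr^{d+N}$ and lift $X_1,\ldots,X_m$ to vector fields $\widetilde{X}_1,\ldots,\widetilde{X}_m$ which, at each point, approximate a free step-$r$ system of left-invariant vector fields on a nilpotent Lie group $G$, with $r$ the H\"ormander step. On $G$, the operator $\sum\widetilde{X}_i^2$ is homogeneous of degree $2$ under the intrinsic dilations, so Folland's theory yields a global fundamental solution $\Gamma$, homogeneous of degree $2-Q$, with the sharp gradient bound $|\widetilde{X}_i\Gamma(\xi,\eta)|\le Cd(\xi,\eta)^{1-Q}$. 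Using $\Gamma$ and integration by parts over $B=B(x,r)$, I would then derive for every $y\in B$ a representation
\[
f(y)-\tilde{f}_{x,r}=\sum_{i=1}^m\int_B K_i(y,z)\,\widetilde{X}_i f(z)\,d\mathcal{L}(z),\qquad |K_i(y,z)|\le Cd(y,z)^{1-Q}.
\]

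Next, integrating $|f(y)-\tilde{f}_{x,r}|$ over $y\in B$ and applying Fubini would reduce matters to the uniform bound $\int_B d(y,z)^{1-Q}\,d\mathcal{L}(y)\le Cr$ for $z\in B$, which follows from a standard dyadic annular decomposition together with the volume estimate $|B(z,s)|\sim s^Q$ of Nagel-Stein-Wainger. Projecting the lifted inequality back down to the original manifold via the fiber-integration properties of the Rothschild-Stein construction would then yield \eqref{eq:jerison} for the original vector fields.

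The hardest step will be producing the fundamental solution $\Gamma$ with the sharp gradient bound $|\widetilde{X}_i\Gamma|\le Cd^{1-Q}$ in a setting where the vector fields are not themselves translation invariant. This is the analytic core of Jerison's argument, requiring Folland's analysis of homogeneous distributions on stratified groups combined with careful control of the error terms produced by the Rothschild-Stein lifting. Once $\Gamma$ is in hand, the remainder of the proof reduces to fractional integration on a doubling metric measure space.
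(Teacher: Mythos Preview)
The paper does not prove this theorem at all: it is quoted verbatim from Jerison \cite[Thm.~2.1]{Je} and used as a black box. The only commentary the paper adds is the remark that Jerison's original argument was written for the $L^2$ Poincar\'e inequality and that Haj\l asz--Koskela observe the same proof yields the $L^1$ version. So there is no ``paper's own proof'' for your proposal to be compared against.

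That said, your sketch is a plausible outline of the classical route to results of this type, but it conflates two distinct proof strategies and is vague at the step you yourself flag as hardest. The Rothschild--Stein lifting produces vector fields that are only \emph{approximately} free nilpotent at each point, with error terms of lower homogeneity; Folland's fundamental solution lives on the model group, not on the lifted manifold, so the representation formula you write down is not literally available. One needs either a parametrix construction with remainder estimates (which is substantial work), or---closer to what Jerison actually does---a direct argument based on the Nagel--Stein--Wainger ball-volume estimates together with a Whitney-type decomposition and chaining, avoiding fundamental solutions altogether. Your plan as written would not go through without filling in this gap, and filling it in is essentially reproving the theorem from scratch rather than invoking it as the paper does.
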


\begin{remark}
Jerison really proved the $2$-Poincar\'e inequality
\[
\int_{B(x,r)} |f-\tilde{f}_{x,r}|^2\,d\mathcal{L}\leq Cr^2\int_{B(x,r)} \bigg(\sum_{i=1}^m X_i(f)^2\bigg)\,d\mathcal{L}.
\]
However, as stated by Hajłasz and Koskela \cite[Thm.~11.20]{HK}, his proof also works for the $L^1$ norm in both sides of the inequality.
\end{remark}

\begin{remark}
The dependence of the constants $C$, $r_0$ is described in \cite[p.~505]{Je}.
\end{remark}

Using Jerison's result we can easily prove

\begin{lemma}[Poincar\'e's inequality]
\label{lem:poincare}
Let $(M,g_\mathcal{H},\omega)$ be a contact sub-Riemannian manifold, and $K\subset M$ a compact subset. Then there exist constants $C_P$, $r_0>0$, only depending on $K$, such that
\begin{equation}
\label{eq:poincare}
\int_{B(p,r)}|u-u_{p,r}|\,dv_g\le C_pr\int_{B(p,r)} |\nabla_hu|\,dv_g,
\end{equation}
for all $p\in K$, $0<r<r_0$, $u\in C^\infty(M)$.
\end{lemma}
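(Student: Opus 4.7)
The plan is to reduce to Jerison's theorem by working in Darboux charts and then paste local inequalities together using compactness of $K$.

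First, around each $p\in K$ I would apply Darboux's Theorem to obtain an open neighborhood $U_p$ of $p$ and a strict contact diffeomorphism $\varphi_p\colon U_p\to V_p\subset\rr^{2n+1}$, where $V_p$ carries the standard contact form $\omega_0$. In $V_p$ the horizontal distribution is spanned by the $C^\infty$ vector fields
\[
X_i=\ptl_{x_i}+y_i\ptl_t,\qquad Y_i=\ptl_{y_i}-x_i\ptl_t,\qquad i=1,\dots,n,
\]
which satisfy H\"ormander's condition, and the Carnot-Carath\'eodory distance induced by any smooth metric on $\mathcal{H}|_{V_p}$ is locally equivalent to the one coming from the pulled-back metric $\varphi_p^{-1*}g_\mathcal{H}$. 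By compactness of $K$ I extract a finite subcover $U_{p_1},\dots,U_{p_N}$, and I shrink if necessary so that each $\overline{U}_{p_j}$ is compact in the domain of a larger Darboux chart (this is needed so that every ball $B(p,2r)$ with $p\in K\cap U_{p_j}$ and $r<r_0$ is contained in the chart).

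Next, on each $\overline{U}_{p_j}$ the Riemannian volume $dv_g$ and the Lebesgue measure $\mathcal{L}$ on the chart are mutually absolutely continuous with densities bounded above and below by positive constants, and the horizontal gradient $|\nabla_h u|$ computed intrinsically is comparable, up to positive constants, to $\bigl(\sum_i X_i(u)^2+Y_i(u)^2\bigr)^{1/2}$. Applying Jerison's inequality \eqref{eq:jerison} on $V_{p_j}$ yields, after changing back via $\varphi_{p_j}$, an inequality of the form
\[
\int_{B(p,r)}|u-\tilde u_{p,r}|\,dv_g\le C'_j\,r\int_{B(p,r)}|\nabla_h u|\,dv_g,
\]
valid for $p\in K\cap U_{p_j}$, $0<r<r_j$, and $u\in C^\infty(M)$, where $\tilde u_{p,r}$ denotes the mean of $u$ on $B(p,r)$ with respect to $\mathcal{L}$ (or equivalently, up to a constant, with respect to $dv_g$). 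The standard identity
\[
\int_{B(p,r)}|u-u_{p,r}|\,dv_g\le 2\int_{B(p,r)}|u-c|\,dv_g
\]
for any constant $c$, applied with $c=\tilde u_{p,r}$, replaces $\tilde u_{p,r}$ by the required mean $u_{p,r}$ at the cost of multiplying the constant by $2$.

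Finally, setting $r_0:=\min_j r_j$ and $C_P:=2\max_j C'_j$ and observing that for every $p\in K$ some $j$ satisfies $p\in U_{p_j}$, I obtain the uniform Poincar\'e inequality \eqref{eq:poincare} with constants depending only on $K$. The only delicate point is ensuring that the constants produced by Jerison's theorem (which a priori depend on the chart and the vector fields) can be chosen uniformly; this is handled by the finite subcover together with the uniform bi-Lipschitz equivalence between the intrinsic and Euclidean data on each $\overline{U}_{p_j}$, so there is no serious obstacle beyond bookkeeping.
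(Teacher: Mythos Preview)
Your overall strategy---Darboux charts, Jerison's theorem, finite subcover, then take extrema of the constants---is exactly the paper's. The one substantive difference, and it is a genuine gap, lies in how you set up the chart side. You work with the \emph{standard} Heisenberg frame $X_i,Y_i$ and rely on bi-Lipschitz equivalence between the standard Heisenberg CC distance and the CC distance coming from the pushed-forward metric $\varphi_p^{-1*}g_\mathcal{H}$. But Jerison's inequality is stated on CC balls \emph{for the chosen vector fields}: with the standard frame the balls in his conclusion are Heisenberg balls, not the images $\varphi_p(B(p,r))$ of the intrinsic balls of $M$. Bi-Lipschitz equivalence only gives an inclusion $B_\mathrm{H}(x,r/L)\subset\varphi_p(B(p,r))\subset B_\mathrm{H}(x,Lr)$, and a Poincar\'e inequality on $B_\mathrm{H}(x,Lr)$ does not restrict to one on the intermediate set: the gradient integral on the right would end up over the larger Heisenberg ball, yielding only a \emph{weak} $(1,1)$-Poincar\'e inequality with a dilation on the right-hand side. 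Upgrading that to the strong inequality you claim requires a separate chaining/self-improvement argument that you do not supply.

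The paper avoids this entirely by pushing the sub-Riemannian metric forward to $\rr^{2n+1}$ and choosing an orthonormal basis $X_1,\dots,X_{2n}$ of $\mathcal{H}_0$ \emph{with respect to the pushed-forward metric} $(\varphi^{-1})^*g_\mathcal{H}$. With that choice $\varphi$ becomes an isometry of CC metric spaces, so intrinsic balls in $M$ map \emph{exactly} to the CC balls that appear in Jerison's theorem for this frame, and one has the pointwise equality $|\nabla_h u|=\big(\sum_i X_i(u\circ\varphi^{-1})^2\big)^{1/2}$ rather than a mere comparability. After that the only remaining transfer is the measure comparison $dv_g\asymp d\mathcal{L}$ and the swap of means, both of which you handle correctly. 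The fix to your argument is thus simply to apply Jerison with the transported orthonormal frame rather than the standard Heisenberg one.
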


\begin{proof}
For every $p\in K$ we consider a Darboux chart centered at $p$, i.e., an open neighborhood $U_p$ of $p$ together with a diffeomorphism $\phi_p:U_p\to\rr^{2n+1}$ with $\phi_p(p)=0$ and $\phi_p^*\,\omega_0=\omega$, where $\omega_0$ is the standard contact form in Euclidean space.

We denote by $h_\la:\rr^{2n+1}\to\rr^{2n+1}$, for $\la>0$, the intrinsic dilation of ratio $\la$, defined by $h_\la(z,t):=(\la z,\la^2 t)$, for $(z,t)\in\cc^n\times\rr\equiv\rr^{2n+1}$. For every $p\in K$ we choose $\la(p)>0$ so that the image of $U_p$ by $\varphi_p:=h_{\la(p)}\circ\phi_p$ contains the closure $\overline{E}_1$ of the unit ball $E_1\subset\rr^{2n+1}$. From the open covering $\{\varphi_p^{-1}(E_1)\}_{p\in K}$ of $K$ we extract a finite subcovering $\varphi_{p_1}^{-1}(E_1)$, $\ldots$, $\varphi_{p_r}^{-1}(E_1)$. From now on we fix some $p_i$, $i=1,\ldots r$, and we take $p=p_i$, $\varphi=\varphi_i$, $U=U_{p_i}$.

We consider the scalar product $h:=(\varphi^{-1})^*\,g_\mathcal{H}$ in the contact distribution $\mathcal{H}_0:=\text{ker}(\omega_0)$. Let $X_1,\ldots, X_{2n}$ be an orthonormal basis of $\mathcal{H}_0$ with respect to $h$ in $\Om$. Observe that $\varphi$ is a contact transformation from $(U,\omega)$ to $(\Om,\omega_0)$ that preserves the sub-Riemannian metrics. Hence $\varphi$ is an isometry between metric spaces when we consider on $(U,\omega)$ its associated Carnot-Carath\'eodory distance $d$ and on $(\Om,\omega_0)$ the distance induced by the family of vector fields $X_1,\ldots,X_{2n}$. Moreover, if $u\in C^\infty(M)$ then, for every $p\in U$, we have
\begin{equation}
\label{eq:nablax}
|(\nabla_h u)_p|=\bigg(\sum_{i=1}^{2n} (X_i)_p(\varphi\circ u)\bigg)^{1/2}.
\end{equation}
Let $\mu:=\varphi^{-1}(dv_g)$. Since $\varphi$ is a diffeomorphism, $\mu$ and $d\mathcal{L}$ satisfy
\begin{equation}
\label{eq:ell}
\ell\,\mathcal{L}(E)\leq \mu(E)\leq L\,\mathcal{L}(E),
\end{equation}
for some constants $\ell$, $L>0$, and $E$ contained in a compact neighborhood of $E_1$ in $\varphi(U)$.

By Jerison's result, there are $C$, $r_0>0$ so that
\[
\int_{B(x,r)}|f-\tilde{f}_{x,r}|\,d\mathcal{L}\leq Cr\int_{B(x,r)}\bigg(\sum_{i=1}^{2n} (X_i(f))^2\bigg)^{1/2}d\mathcal{L}
\]
for all $f\in C^\infty(\Om)$, $z\in E_1$. Since
\begin{equation*}
\int_{B(x,r)}|f-\tilde{f}_{x,r}|\,d\mathcal{L}=\frac{1}{\mathcal{L}(B(x,r))}\,\int_{B(x,r)}\int_{B(x,r)}|f(y)-f(z)|\,d\mathcal{L}(y)\,d\mathcal{L}(z),
\end{equation*}
we can use inequalities \eqref{eq:ell} to prove that there is $C'>0$ such that
\[
\int_{B(x,r)}|f-\tilde{f}_{x,r}|\,d\mathcal{L}\geq C' \int_{B(x,r)}|f-{f}_{x,r}|\,d\mu,
\]
where $f_{x,r}$ is the mean of $f$ in the ball $B(x,r)$. So we obtain from \eqref{eq:jerison} and again from \eqref{eq:ell} that there are $C$, $r_0>0$ so that
\[
\int_{B(x,r)}|f-f_{x,r}|\,d\mu\le Cr\int_{B(x,r)}\bigg(\sum_{i=1}^{2n}X_i(f)^2\bigg)^{1/2}d\mu.
\]

From the definition of $\mu$, the fact that $\varphi$ is an isometry, and \eqref{eq:nablax} we obtain \eqref{eq:poincare} for $p\in\varphi^{-1}(E_1)$. We repeat this process for every open set $\varphi_{p_i}^{-1}(E_1)$, $i=1,\ldots,r$. Taking the maximum of the constants $C$ so obtained and the minimum of the radii $r_0$ it follows that \eqref{eq:poincare} holds for all $p\in K$.
\end{proof}

Using this result we get 

\begin{lemma}
\label{lem:poincareisom}
Let $(M,g_\mathcal{H},\omega)$ be a contact sub-Riemannian manifold such that the~quotient $M/\Isom_\omega(M,g)$ is compact. Then there exist constants $C_P$, $r_0>0$, only depending on $M$, such that
\begin{equation}
\label{eq:poincareisom}
\int_{B(p,r)}|u-u_{p,r}|\,dv_g\le C_Pr\int_{B(p,r)} |\nabla_hu|\,dv_g,
\end{equation}
for all $p\in M$, $0<r<r_0$, $u\in C^\infty(M)$.
\end{lemma}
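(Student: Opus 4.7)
The plan is to bootstrap from Lemma~\ref{lem:poincare}, which already furnishes a Poincar\'e inequality with uniform constants on any compact subset of $M$, by using the action of $\Isom_\omega(M,g)$ to translate any point of $M$ into a fixed compact set.

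First I would produce a compact set $K\subset M$ whose orbit under $\Isom_\omega(M,g)$ covers $M$. Since the quotient $M/\Isom_\omega(M,g)$ is compact and the projection $\pi\colon M\to M/\Isom_\omega(M,g)$ is continuous and open, one may cover $M/\Isom_\omega(M,g)$ by finitely many sets of the form $\pi(\overline{B}(p_j,R))$; the union of these closed balls gives the desired compact $K$, so that for every $p\in M$ there exists $\phi\in\Isom_\omega(M,g)$ with $\phi(p)\in K$. Then I would apply Lemma~\ref{lem:poincare} to this $K$ to obtain constants $C_P,r_0>0$ such that \eqref{eq:poincare} holds for every $p\in K$, every $0<r<r_0$, and every $u\in C^\infty(M)$.

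Next, for an arbitrary $p\in M$ and $u\in C^\infty(M)$, I would pick $\phi\in\Isom_\omega(M,g)$ with $p':=\phi(p)\in K$, set $u':=u\circ\phi^{-1}$, and apply the Poincar\'e inequality on $K$ to $u'$ at the point $p'$. The key observation is that a contact isometry preserves the whole structure involved in \eqref{eq:poincareisom}: it preserves $g_\mathcal{H}$ and the contact form $\omega$, hence the Reeb vector field $T$ and therefore the Riemannian extension $g$ together with its volume form $dv_g$; it preserves the horizontal distribution $\mathcal{H}$, so $|\nabla_h u'|(\phi(q))=|\nabla_h u|(q)$; and it is an isometry for the Carnot-Carath\'eodory distance $d$, so $\phi(B(p,r))=B(p',r)$. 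A change of variables then gives $u'_{p',r}=u_{p,r}$,
\[
\int_{B(p',r)}|u'-u'_{p',r}|\,dv_g=\int_{B(p,r)}|u-u_{p,r}|\,dv_g,\qquad \int_{B(p',r)}|\nabla_h u'|\,dv_g=\int_{B(p,r)}|\nabla_h u|\,dv_g,
\]
so that the inequality \eqref{eq:poincare} for $u'$ at $p'$ becomes exactly \eqref{eq:poincareisom} for $u$ at $p$, with the very same constants $C_P$ and $r_0$.

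I do not anticipate a serious obstacle: the construction of the compact $K$ with $\Isom_\omega(M,g)\cdot K=M$ is standard for continuous actions with compact quotient, and the invariance of all the relevant objects under contact isometries makes the change of variables essentially automatic. The only point to be slightly careful about is keeping track of which structures are preserved (one needs the full contact isometry condition, not just preservation of the horizontal metric, to ensure that $T$ and hence the Riemannian volume $dv_g$ are also preserved); this is already ensured by the definition of $\Isom_\omega(M,g)$ recalled in Section~\ref{sec:preliminaries}.
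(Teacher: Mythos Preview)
Your proposal is correct and is exactly the argument the paper has in mind: the paper does not give a proof of Lemma~\ref{lem:poincareisom} at all, it simply writes ``Using this result we get'' after Lemma~\ref{lem:poincare}, so the intended argument is precisely the translation-by-isometries argument you spell out. Your handling of the invariance of $d$, $dv_g$, $\mathcal{H}$ and $|\nabla_h u|$ under contact isometries, and the construction of the compact $K$ with $\Isom_\omega(M,g)\cdot K=M$, are the right details to fill in.
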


\begin{remark}
Poincar\'e's inequality also holds for functions of bounded variation by an approximation argument, see \cite{Gi}.
\end{remark}

From the $1$-Poincar\'e inequality \eqref{eq:poincare} and inequality \eqref{eq:homogeneity} we can prove, using Theorem~5.1 and Corollary~9.8 in \cite{HK} (see also Remark~3 after the statement of Theorem~5.1 in \cite{HK}), that, given a compact set $K\subset M$, there are positive constants $C$, $r_0$ so that
\begin{equation}
\label{eq:q-poincare}
\bigg(\fint_{B(x,r)}|u-u_{x,r}|^{Q/(Q-1)}\bigg)^{(Q-1)/Q}\le Cr\bigg(\fint_{B(x,r)}|\nabla_hu|\bigg),
\end{equation}
for all $u\in C^\infty(M)$, $x\in K$, $0<r<r_0$. Furthermore, it is well-known that the $q$-Poincar\'e's inequality \eqref{eq:q-poincare} implies the following relative isoperimetric inequality, \cite{EG} and \cite{Gi}

\begin{lemma}[Relative isoperimetric inequality] 
\label{lem:relative}
Let $(M,g_\mathcal{H},\omega)$ be a contact sub-Riemannian manifold, and $K\subset M$ a compact subset. There exists constants $C_I>0$, $r_0>0$, only depending on $K$, so that, for any set $E\subset M$ with locally finite perimeter, we have
\begin{equation}
\label{relativeisop}
C_I\min\big\{|\,E\cap B(x,r)\,|,|\,E^c \cap B(x,r)\,|\big\}^{(Q-1)/Q}\leq \per(E,B(x, r)),
\end{equation}
for any $x\in K$.  
\end{lemma}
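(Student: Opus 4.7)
The plan is to derive the relative isoperimetric inequality by testing the $q$-Poincar\'e inequality \eqref{eq:q-poincare} on the characteristic function $\cf{E}$ and then absorbing the volume factor via the Ahlfors regularity \eqref{eq:ahlfors}. This is the classical BV-to-isoperimetric bootstrap; everything hard has been packaged into \eqref{eq:q-poincare}.

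First I would upgrade \eqref{eq:q-poincare} from smooth functions to BV functions. Fix $x\in K$ and a radius $0<r<r_0$ for which $P(E,\ptl B(x,r))=0$ (such radii form a set of full measure by \eqref{eq:permeasure}); applying the approximation Proposition above to $\cf{E}\in BV_{loc}(M)$ on a slightly larger ball and invoking lower semicontinuity on the left-hand side and convergence of $|\nabla_h u_j|(B(x,r))\to\per(E,B(x,r))$ on the right-hand side gives
\begin{equation*}
\bigg(\fint_{B(x,r)}|\cf{E}-\alpha|^{Q/(Q-1)}\,dv_g\bigg)^{(Q-1)/Q}\le Cr\,\frac{\per(E,B(x,r))}{|B(x,r)|},
\end{equation*}
where $\alpha:=|E\cap B(x,r)|/|B(x,r)|$. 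Using $P(E,\Om)=P(E^c,\Om)$ I may assume $|E\cap B(x,r)|\le|E^c\cap B(x,r)|$, i.e., $\alpha\le 1/2$. Then
\[
\int_{B(x,r)}|\cf{E}-\alpha|^{Q/(Q-1)}\,dv_g=|E\cap B(x,r)|(1-\alpha)^{Q/(Q-1)}+|E^c\cap B(x,r)|\alpha^{Q/(Q-1)}\ge 2^{-Q/(Q-1)}|E\cap B(x,r)|,
\]
so the left-hand side of the displayed Poincar\'e estimate is at least $\tfrac12\big(|E\cap B(x,r)|/|B(x,r)|\big)^{(Q-1)/Q}$.

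Combining the two bounds and multiplying by $|B(x,r)|$ gives
\[
\tfrac12\,|E\cap B(x,r)|^{(Q-1)/Q}\,|B(x,r)|^{1/Q}\le Cr\,\per(E,B(x,r)).
\]
Now I invoke the lower Ahlfors bound \eqref{eq:ahlfors}, which yields $|B(x,r)|^{1/Q}\ge \ell^{1/Q}r$ uniformly for $x\in K$ and $0<r<r_0$. The two factors of $r$ cancel and I obtain
\[
C_I\,|E\cap B(x,r)|^{(Q-1)/Q}\le \per(E,B(x,r)),\qquad C_I:=\ell^{1/Q}/(2C),
\]
which is \eqref{relativeisop} in the case of smaller measure; the symmetric choice $\alpha\ge 1/2$ covers $|E^c\cap B(x,r)|$ and produces the minimum. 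The only delicate step is the approximation argument that transfers \eqref{eq:q-poincare} to $\cf{E}$: one must restrict to radii avoiding atoms of the perimeter measure $\per(E,\cdot)$ and pass to the limit on both sides. Since the exceptional radii form a null set and \eqref{relativeisop} is inherited at the remaining radii by monotone continuity of $r\mapsto |E\cap B(x,r)|$ and $r\mapsto \per(E,B(x,r))$, the inequality extends to every $r\in(0,r_0)$, completing the argument.
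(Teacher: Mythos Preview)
Your proof is correct and follows precisely the route the paper indicates: the paper does not supply a self-contained argument for this lemma but merely invokes the well-known implication from the $q$-Poincar\'e inequality \eqref{eq:q-poincare} to the relative isoperimetric inequality, citing \cite{EG} and \cite{Gi}. You have written out that implication in full---testing on $\cf{E}$ via BV approximation, computing the oscillation term, and using Ahlfors regularity \eqref{eq:ahlfors} to cancel the factor of $r$---which is exactly the standard proof those references contain.
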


\begin{remark} 
A relative isoperimetric inequality in compact subsets of $\rr^n$ for sets $E$ with $\mathcal{C}^1$ boundary was proven in \cite{CDG} for the sub-Riemannian structure given by a family of H\"ormander vector fields. As the authors remark their result holds for any family of vector fields on a connected manifold.
\end{remark}

\begin{remark}
As for Poincar\'e's inequality, the relative isoperimetric inequality \eqref{relativeisop} holds in the whole of $M$ provided $M/\Isom_\omega(M,g)$ is compact.
\end{remark}

\begin{lemma}[Isoperimetric inequality for small volumes]
\label{lem:isopsmall}
Let $(M,g_{\mathcal{H}},\omega)$ be a contact sub-Riemannian manifold so that the quotient $M/\Isom_\omega(M)$ is compact. Then there exists $v_0>0$ and $C_I>0$ such that
\begin{equation}
\label{eq:relisop}
\per(E)\ge C_I|E|^{(Q-1)/Q},
\end{equation}
 for any finite perimeter set $E\subset M$ with $|E|<v_0$.
\end{lemma}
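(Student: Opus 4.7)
The plan is to upgrade the relative isoperimetric inequality of Lemma~\ref{lem:relative} (which, as observed in the remark following it, holds with uniform constants on all of $M$ when $M/\Isom_\omega(M,g)$ is compact) to an absolute one by means of a Vitali-type covering argument in which the ball radius is tuned to $|E|$.

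First, I would calibrate the radius. The Ahlfors regularity \eqref{eq:ahlfors}, with constants uniform on $M$, gives $|B(x,r)|\geq \ell\, r^Q$ for every $x\in M$ and $0<r<r_0$. Set $r:=(2|E|/\ell)^{1/Q}$; then $|B(x,r)|\geq 2|E|$ for every $x$. Defining $v_0:=\ell\, r_0^Q/2$ and restricting to $|E|<v_0$ ensures $r<r_0$, so that Lemma~\ref{lem:relative} applies at every $x\in M$ with this radius. At each such $x$,
\[
|E\cap B(x,r)|\leq |E|\leq \tfrac{1}{2}|B(x,r)|\leq |E^c\cap B(x,r)|,
\]
so the minimum in \eqref{relativeisop} equals $|E\cap B(x,r)|$ and
\[
C_I\,|E\cap B(x,r)|^{(Q-1)/Q}\leq \per(E,B(x,r)).
\]

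Next, I would cover $M$ efficiently. Pick a maximal family $\{B(x_i,r/5)\}_i$ of pairwise disjoint balls; a standard argument shows that $\{B(x_i,r)\}_i$ then covers $M$. By the uniform doubling property \eqref{eq:doubling}, comparing $|B(x_j,r/5)|$ with $|B(x_i,3r)|$ for those $j$ with $B(x_j,r)\cap B(x_i,r)\neq \emptyset$ bounds the multiplicity of the enlarged family by a constant $N$ depending only on $M$. Summing the previous pointwise inequality over $i$ and using $\sum_i\per(E,B(x_i,r))\leq N\,\per(E)$ (from the multiplicity bound and the fact that $\per(E,\cdot)$ is a Borel measure) yields
\[
\sum_i |E\cap B(x_i,r)|^{(Q-1)/Q}\leq N\,C_I^{-1}\,\per(E).
\]

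Finally, I would apply the elementary subadditivity $\big(\sum_i a_i\big)^{p}\leq \sum_i a_i^{p}$, valid for $a_i\geq 0$ and $p=(Q-1)/Q\in(0,1)$ (a consequence of the concavity of $t\mapsto t^p$), together with $\sum_i|E\cap B(x_i,r)|\geq |E|$ (since the balls cover $M\supset E$), to conclude $|E|^{(Q-1)/Q}\leq N\,C_I^{-1}\,\per(E)$, which is \eqref{eq:relisop} after relabeling $C_I/N$ as $C_I$. The only real difficulty is checking that all constants involved (Ahlfors $\ell$, doubling, the relative isoperimetric $C_I$ and $r_0$, and the multiplicity $N$) are uniform over the possibly non-compact manifold $M$; this is precisely what the hypothesis $M/\Isom_\omega(M,g)$ compact guarantees, after which the remaining argument mirrors the classical derivation of the global isoperimetric inequality from its relative counterpart in the Euclidean and Carnot group settings.
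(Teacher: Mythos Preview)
Your proof is correct and follows essentially the same strategy as the paper: cover $E$ by balls of a radius small enough that the relative isoperimetric inequality \eqref{relativeisop} picks out $|E\cap B|$ as the minimum, sum the resulting local inequalities, control the overlap via the doubling property, and conclude with the subadditivity $(\sum a_i)^q\le\sum a_i^q$. The only cosmetic difference is that the paper fixes a single radius $\delta$ (independent of $E$) and chooses $v_0$ so that $|B(x,\delta)|\ge 2v_0>2|E|$, whereas you tune $r=(2|E|/\ell)^{1/Q}$ to the set; both choices lead to the same conclusion with constants depending only on the doubling and relative isoperimetric constants of $M$.
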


\begin{proof}
This is a classical argument \cite[Lemma~4.1]{Le-Ri}. We fix $\delta>0$ small enough so that~Poin\-car\'e's inequality holds for balls of radius smaller than or equal to $\delta$. Since $M/\Isom_\omega(M)$ is compact, there exists $v_0>0$ so that $|B(x,\delta)|\geq 2v_0$ holds for all $x\in M$. Let $E\subset M$ be a set of finite perimeter with $|E|<v_0$. We fix a maximal family of points $\{x_i\}_{i\in\nn}$ with the properties
\begin{equation}
\label{eq:xi}
d(x_i,x_j)\geq \frac{\delta}{2}\ \ \text{for}\ i\neq j, \qquad E\subseteq\bigcup\limits_{i\in\nn}B(x_i,\delta).
\end{equation}
Letting $q:=(Q-1)/Q$ we have
\begin{equation}
\label{eq:estimateEq}
|E|^q\leq \left(\sum\limits_{i\in\nn} |B(x_i,\delta)\cap E|\right)^q\leq \sum\limits_{i\in\nn}|B(x_i,\delta)\cap E|^q\leq
C_1 \sum\limits_{i\in\nn}\per(E, B(x_i, \delta))
\end{equation}
from \eqref{eq:xi}, the concavity of the function $x\mapsto x^q$, and the relative isoperimetric inequality in Lemma~\ref{lem:relative}. For $z\in M$, we~define $A(z):=\{x_i: z\in B(x_i,\delta)\}$, so that $B(x,\delta/4)\subset B(z,2\delta)$ and $B(z,\delta/4)\subset B(x,2\delta)$ for $x\in A(z)$. Since the balls $B(x_i,\delta/4)$ are disjoint by \eqref{eq:xi}, we get
\begin{equation}
\label{eq:card1}
\#A(z) \min\limits_{x\in A(z)} |B(x,\delta/4)|\le\bigg|\bigcup\limits_{x\in A(z)} B(x,\delta/4)\bigg|\leq |B(z,2\delta)|.
\end{equation}
On the other hand, since $B(z,\delta/4)\subset B(x,2\delta)$ we have
\begin{equation}
\label{eq:card2}
|B(x,\delta/4)|\ge C_D^{-3}\,|B(z,\delta/4)|,
\end{equation}
where $C_D>0$ is the doubling constant. We conclude from \eqref{eq:card1} and \eqref{eq:card2} that
\[
\# A(z)\le C^6_D,
\]
and so
\[
\sum_{i\in\nn} \per(E,B(x_i,\delta))\le C\,\per(E).
\]
This inequality and \eqref{eq:estimateEq} yields \eqref{eq:relisop}.
\end{proof}

\begin{remark}
Another approach to isoperimetric inequalities in Carnot-Carath\'eodory spaces is provided by Gromov \cite[\S~2.3]{gromov-cc}.
\end{remark}

\begin{remark}
An isoperimetric inequality for small volumes in compact Riemannian manifolds was proven by Berard and Meyer \cite{MR690651}.
\end{remark}

\section{The Deformation Lemma. Boundedness of isoperimetric regions}
\label{sec:deformation}

In order to prove Theorem~\ref{th:main}, we need to construct a foliation of a punctured~neighborhood of any point in $M$ by smooth hypersurfaces with bounded mean curvature. We briefly recall this definition. Let $\Sg\subset M$ be a $C^2$ hypersurface in $M$. The \emph{singular set} $\Sg_0$ of $\Sg$ is the set of points in $\Sg$ where the tangent hyperplane to $\Sg$ coincides with the horizontal distribution. If $\Sg$ is orientable then there exists a globally defined unit normal vector field $N$ to $\Sg$ in $(M,g)$, from which a horizontal unit normal $\nu_h$ can be defined on $\Sg\setminus\Sg_0$ by
\begin{equation}
\label{eq:nuh}
\nuh:=\frac{N_h}{|N_h|},
\end{equation}
where $N_h$ is the orthogonal projection of $N$ to the horizontal distribution. The sub-Riemannian \emph{mean curvature} of $\Sg$ is the function, defined in $\Sg\setminus\Sg_0$, by
\begin{equation}
\label{eq:h}
H:=-\sum_{i=1}^{2n-1}\escpr{D_{e_i}\nuh,e_i},
\end{equation}
where $D$ is the Levi-Civita connection in $(M,g)$, and $\{e_1,\ldots,e_{2n-1}\}$ is an orthonormal basis of  $T\Sg\cap\mathcal{H}$. We recall that, given a vector field $X$ defined on $\Sg$, the divergence of $X$ in $\Sg$, $\divv_\Sg X$, is defined by
\begin{equation}
\label{eq:divsg}
\divv_\Sg X:=\sum_{i=1}^{2n}\escpr{D_{e_i}\nuh,e_i},
\end{equation}
where $\{e_1,...,e_{2n}\}$ is an orthonormal basis of $T\Sg$. 

We define the tensor
\begin{equation}
\label{eq:sigma}
\sg(X,Y):=\escpr{D_XT,Y},
\end{equation}
where $X$ and $Y$ are vector fields on $M$. In the case of the Heisenberg group we have $D_XT=J(X)$, so that $\sg(X,Y)=\escpr{J(X),Y}$. 

At every point of $\Sg\setminus\Sg_0$, we may choose an orthonormal basis of $T\Sg$ consisting on an orthonormal basis $\{e_1,\ldots,e_{2n-1}\}$ of $T\Sg\cap\mathcal{H}$ together with the vector
\begin{equation}
\label{eq:S}
S:=\escpr{N,T}\,\nuh-|N_h|\,T,
\end{equation}
which is orthogonal to $N$ and of modulus $1$. Hence we obtain in $\Sg\setminus\Sg_0$
\begin{equation}
\label{eq:hdivsg}
\divv_\Sg\nuh=-H+\escpr{D_S\nuh,S}.
\end{equation}
From \eqref{eq:S} and equality $|\nuh|=1$ we immediately get $\escpr{D_S\nuh,S}=-|N_h|\,\escpr{D_S\nuh,T}$, which is equal to $|N_h|\,\sg(\nuh,S)$. Since the vector field $S$ can be rewritten in the form $S=|N_h|^{-1}\big(\escpr{N,T}\,N-T\big)$, and $D_TT=0$, we finally get
\begin{equation*}
\escpr{D_S\nuh,S}=\escpr{N,T}\,\sg(\nuh,N),
\end{equation*}
and so
\begin{equation}
\label{eq:hdivsgfinal}
\divv_\Sg \nuh=-H+\escpr{N,T}\,\sg(\nuh,N)
\end{equation}

The mean curvature \eqref{eq:h} appears in the expression of the first derivative of the sub-Riemannian area functional \eqref{eq:area}.

\begin{lemma}
Let $\Sg\subset M$ be an orientable hypersurface of class $C^2$ in a contact sub-Riemannian manifold $(M,g_\mathcal{H},\omega)$, and let $U$ be a vector field with compact support in $M\setminus\Sg_0$ and associated one-parameter family of diffeomorphisms $\{\varphi_s\}_{s\in\rr}$. Then
\begin{equation}
\label{eq:1stvar}
\frac{d}{ds}\bigg|_{s=0} A(\varphi_s(\Sg))=-\int_\Sg H\,\escpr{U,N}\,d\Sg.
\end{equation}
\end{lemma}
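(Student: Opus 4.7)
My plan is to express the sub-Riemannian area as the flux through $\Sg$ of a horizontal unit extension of $\nuh$ and then differentiate the flux via Cartan's magic formula; this will reduce the first variation to an integral of $\divv(V)\,\escpr{U,N}$ over $\Sg$, and the content of the lemma will then be the identification $\divv(V)|_\Sg=-H$.

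First, I would build the extension: since $\mathrm{supp}(U)$ avoids $\Sg_0$, on a tubular neighborhood $W$ of $\mathrm{supp}(U)\cap\Sg$ I extend $\nuh$ to a smooth unit horizontal vector field $V$ (for instance, parallel-transport $\nuh$ along the $N$-geodesics normal to $\Sg$, then project onto $\mathcal{H}$ and renormalize, which is legitimate because $|\nuh|=1$ on $\Sg\setminus\Sg_0$). Writing $\Sg_s:=\var_s(\Sg)$ with Riemannian unit normal $N^s$, horizontality and unit length of $V$ give the pointwise Cauchy--Schwarz inequality $\escpr{V,N^s}=\escpr{V,(N^s)_h}\leq |(N^s)_h|$, with equality at $s=0$. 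Hence the smooth nonnegative function $A(\Sg_s)-\tilde A(\Sg_s)$, where $\tilde A(\Sg_s):=\int_{\Sg_s}\escpr{V,N^s}\,d\Sg_s$, attains its minimum at $s=0$, and I conclude $A'(0)=\tilde A'(0)$. Recognizing $\escpr{V,N^s}\,d\Sg_s$ as $(\iota_V\,dv_g)|_{\Sg_s}$, pulling back by $\var_s$ and invoking Cartan's formula together with Stokes (the exact-form contribution vanishes because $\iota_U\iota_V\,dv_g$ is compactly supported in $\Sg\setminus\Sg_0$), one obtains
\[
A'(0)\;=\;\tilde A'(0)\;=\;\int_\Sg\divv(V)\,\escpr{U,N}\,d\Sg.
\]

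The main obstacle---and the one genuinely sub-Riemannian step---is to show that $\divv(V)|_\Sg=-H$, even though a priori this quantity depends on how $\nuh$ is extended off $\Sg$. Split $\divv(V)=\divv_\Sg V+\escpr{D_NV,N}$. By \eqref{eq:hdivsgfinal} and $V|_\Sg=\nuh$ one has $\divv_\Sg V|_\Sg=-H+\escpr{N,T}\,\sg(\nuh,N)$. For the normal term, $|V|^2\equiv 1$ on $W$ yields $\escpr{D_NV,\nuh}=0$ on $\Sg$, and differentiating $\escpr{V,T}\equiv 0$ along $N$ gives $\escpr{D_NV,T}=-\sg(N,\nuh)$ on $\Sg$; decomposing $N=\escpr{N,T}\,T+|N_h|\,\nuh$ then produces $\escpr{D_NV,N}=-\escpr{N,T}\,\sg(N,\nuh)$. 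A short expansion of $\sg(X,Y)=\escpr{D_XT,Y}$ using the Reeb identity $D_TT=0$ and $\escpr{D_XT,T}=\tfrac12 X|T|^2=0$ shows that both $\sg(\nuh,N)$ and $\sg(N,\nuh)$ reduce to $|N_h|\,\sg(\nuh,\nuh)$, so the two $\escpr{N,T}$-contributions cancel exactly and $\divv(V)|_\Sg=-H$. Substituting into the flux formula yields \eqref{eq:1stvar}.
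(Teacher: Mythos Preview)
Your argument is correct and follows a genuinely different route from the paper's.

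The paper proceeds by differentiating the weighted Riemannian integral $\int_\Sg |N_h|\,d\Sg$ directly: it quotes the identity
\[
\frac{d}{ds}\bigg|_{s=0} A(\varphi_s(\Sg))=\int_\Sg \big\{U^{\perp}(|N_h|)+|N_h|\,\divv_\Sg U^{\perp}\big\}\,d\Sg
\]
from a reference, computes $U^{\perp}(|N_h|)$ by expanding $U^\perp\escpr{N,\nu_h}$, and then rearranges the integrand via $\nu_h=(\nu_h)^\top+|N_h|N$ and the Riemannian divergence theorem on $\Sg$, so that the identity \eqref{eq:hdivsgfinal} produces the single term $-Hu$. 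The only $\sg$-quantity that appears is $\sg(\nu_h,N)$, and it cancels against the same term coming from \eqref{eq:hdivsgfinal}.

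Your calibration argument replaces this computation by a flux: the inequality $\escpr{V,N^s}\le |(N^s)_h|$ with equality at $s=0$ yields $A'(0)=\tilde A'(0)$, and Cartan's formula plus Stokes turns $\tilde A'(0)$ into $\int_\Sg \divv(V)\,\escpr{U,N}\,d\Sg$. The remaining task is then entirely pointwise: show $\divv(V)|_\Sg=-H$. Here you again invoke \eqref{eq:hdivsgfinal}, but you also have to control the normal term $\escpr{D_NV,N}$, which forces you to use the two constraints $|V|\equiv 1$ and $\escpr{V,T}\equiv 0$ on the extension and to check the extra cancellation $\sg(\nu_h,N)=\sg(N,\nu_h)$. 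Your verification of this last identity (both sides equal $|N_h|\,\sg(\nu_h,\nu_h)$, using $D_TT=0$ and $\escpr{D_XT,T}=0$) is correct and is exactly what makes the argument extension-independent.

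What each approach buys: the paper's computation stays on $\Sg$ and never needs an ambient extension of $\nu_h$, so it avoids the additional $\sg(\nu_h,N)=\sg(N,\nu_h)$ step; your argument, on the other hand, is self-contained (it does not rely on the cited first-variation identity), is more geometric, and makes transparent that the first variation of the sub-Riemannian area is governed by the ambient divergence of any unit horizontal extension of $\nu_h$, a semicalibration viewpoint that is often useful in its own right.
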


\begin{proof}
Let $u:=\escpr{U,N}$. Following the proof of \cite[Lemma~3.2]{Ri-Ro3} we obtain
\[
\frac{d}{ds}\bigg|_{s=0} A(\varphi_s(\Sg))=\int_\Sg \{U^{\perp}(|N_h|)+|N_h| \divv_{\Sg}U^{\perp}\}\,d\Sg.
\]
For the first summand in the integrand we obtain
\begin{align*}
U^{\perp}(|N_h|)=U^\perp\big(\escpr{N,\nuh}\big)&=\escpr{D_{U^{\perp}} N,\nu_h}+\escpr{N,D_{U^\perp}\nuh}
\\
&=-\escpr{\nabla_\Sg u,\nuh}-\escpr{N,T}\,\sg(\nuh,U^\bot)
\\
&=-(\nuh)^\top(u)-\escpr{N,T}\,\sg(\nuh,U^\bot),
\end{align*}
since $D_{U^{\perp}}N=-\nabla_{\Sg} u$. So we get from the previous formula
\begin{equation*}
\begin{aligned}
U^\bot(|N_h|)&+|N_h|\divv_{\Sg}U^\bot=\\
&=-(\nu_h)^{\top}(u)-\escpr{N,T}\,\sg(\nuh,U^\bot)+u\divv_\Sg (|N_h| N)\\
&=-\divv_\Sg (u(\nu_h)^{\top})+u\divv_\Sg (\nu_h)^{\top}-\escpr{N,T}\,\sg(\nuh,U^\bot)+u \divv_\Sg (|N_h|N)\\
&=-\divv_\Sg (u(\nu_h)^{\top})+u\divv_\Sg (\nu_h)-u\,\escpr{N,T}\,\sg(\nuh,N),
\end{aligned}
\end{equation*}
where we have used $\nu_h=(\nu_h)^{\top}+|N_h|N$ in the final step. Since $U$ has compact support out of $\Sg_0$, where $\nuh$ is well defined, we conclude from the Riemannian Divergence Theorem and \eqref{eq:hdivsgfinal}
\begin{equation*}
\frac{d}{ds}\bigg|_{s=0} A(\varphi_s(\Sg))=\int_\Sg u\,\{\divv_\Sg (\nu_h)-\escpr{N,T}\,\sg(\nuh,N)\}\,d\Sg=-\int_\Sg H\,\escpr{U,N}\,d\Sg,
\end{equation*}
which completes the proof of the Lemma.
\end{proof}

The local model of a sub-Riemannian manifold is the contact manifold $(\rr^{2n+1},\omega_0)$, where $\omega_0:=dt+\sum_{i=1}^n (x_idy_i-y_idx_i)$ is the standard contact form in $\rr^{2n+1}$, together with an arbitrary positive definite metric $g_{\mathcal{H}_0}$ in $\mathcal{H}_0$. A basis of the horizontal distribution is given by
\[
X_i:=\frac{\ptl}{\ptl x_i}+y_i\,\frac{\ptl}{\ptl t},\qquad Y_i:=\frac{\ptl}{\ptl y_i}+x_i\,\frac{\ptl}{\ptl t},\qquad i=1,\ldots,n,
\]
and the Reeb vector field is
\[
T:=\frac{\ptl}{\ptl t}.
\]
The metric $g_{\mathcal{H}_0}$ will be extended to a Riemannian metric on $\rr^{2n+1}$ so that the Reeb vector field is unitary and orthogonal to $\mathcal{H}_0$. We shall usually denote the set of vector fields $\{X_1,Y_1,\ldots,X_n,Y_n\}$ by $\{Z_1,\ldots,Z_{2n}\}$. The coordinates of $\rr^{2n+1}$ will be denoted by $(x_1,y_1,\ldots,x_n,y_n,t)$, and the first $2n$ coordinates will be abbreviated by $z$ or $(x,y)$. We shall consider the map $F:\rr^{2n}\to\rr^{2n}$ defined by
\[
F(x_1,y_1,\ldots,x_n,y_n):=(-y_1,x_1,\ldots,-y_n,x_n).
\]

Given a $C^2$ function $u:\Om\subset\rr^{2n}\to\rr$ defined on an open subset $\Om$, we define the graph $G_u:=\{(z,t): z\in\Om, t=u(z)\}$. By \eqref{eq:area}, the sub-Riemannian area of the graph is given by
\[
A(G_u)=\int_{G_u} |N_h|\,dG_u,
\]
where $dG_u$ is the Riemannian metric of the graph and $|N_h|$ is the modulus of the horizontal projection of a unit normal to $G_u$. We consider on $\Om$ the basis of vector~fields $
\big\{\tfrac{\ptl}{\ptl x_1},\tfrac{\ptl}{\ptl y_1},\ldots,\tfrac{\ptl}{\ptl x_n},\tfrac{\ptl}{\ptl y_n}\big\}$.

By the Riemannian area formula
\begin{equation}
\label{eq:dgu}
dG_u=\text{Jac}\,d\mathcal{L}^{2n},
\end{equation}
where $d\mathcal{L}^{2n}$ is Lebesgue measure in $\rr^{2n}$ and $\text{Jac}$ is the Jacobian of the canonical map $\Om\to G_u$ given by
\begin{equation}
\label{eq:jac}
\text{Jac}=\det(g_{ij}+(\nabla u+F)_i(\nabla u+F)_j)^{1/2}_{i,j=1,\ldots,2n},
\end{equation}
where $g_{ij}:=g(Z_i,Z_j)$, $\nabla$ is the Euclidean gradient of $\rr^{2n}$ and $(\nabla u+F)_i$ is the $i$-th Euclidean coordinate of the vector field $\nabla u+F$ in $\Om$. We have
\[
(\nabla u+F)_i=
\begin{cases}
u_{x_{(i+1)/2}}-y_{(i+1)/2}, & i\ \text{odd}, \\
u_{y_{i/2}}+x_{i/2}, & i\ \text{even}.
\end{cases}
\]
Let us compute the composition of $|N_h|$ with the map $\Om\to G_u$. The tangent space $T G_u$ is spanned by
\begin{equation}
\label{eq:zi}
Z_i+(\nabla u+F)_i\,T, \quad i=1,\ldots,2n.
\end{equation}
So the projection to $\Om$ of the singular set $(G_u)_0$ is the set $\Om_0\subset\Om$ defined by $\Om_0:=\{z\in\Om: (\nabla u+F)(z)=0\}$. Let us compute a \emph{downward pointing} normal vector $\tilde{N}$ to $G_u$ writing
\begin{equation}
\label{eq:tilden}
\tilde{N}=\sum_{i=1}^{2n}(a_i Z_i)-T. 
\end{equation}
The horizontal component of $\tilde{N}$ is $\tilde{N}_h=\sum_{i=1}^{2n} a_iZ_i$.  We have
\[
\sum_{i=1}^{2n} a_ig_{ij}=g(\tilde{N}_h,Z_j)=g(\tilde{N},Z_j)=-(\nabla u+F)_j\escpr{\tilde{N},T}=(\nabla u +F)_j,
\]
since $Z_j$ is horizontal, $\tilde{N}$ is orthogonal to $Z_j$ defined by \eqref{eq:zi}, and \eqref{eq:tilden}. Hence 
\[
(a_1,\ldots,a_{2n})=b(\nabla u+F),
\]
where $b$ is the inverse of the matrix $\{g_{ij}\}_{i,j=1,\ldots,2n}$. So we get
\begin{equation}
\label{eq:modtilden}
|\tilde{N}|=(1+\escpr{\nabla u +F,b(\nabla u +F)})^{1/2},
\end{equation}
and
\[
|\tilde{N}_h|=\escpr{\nabla u+F, b(\nabla u+F)}^{1/2},
\]
where $\escpr{,}$ is the Euclidean Riemannian metric in $\rr^{2n}$, and so
\begin{equation}
\label{eq:modnh}
|N_h|=\frac{|\tilde{N}_h|}{|\tilde{N}|}=\frac{\escpr{\nabla u+F, b(\nabla u+F)}^{1/2}}{\big(1+\escpr{\nabla u +F,b(\nabla u +F)}\big)^{1/2}}.
\end{equation}
Observe that, from \eqref{eq:tilden} and \eqref{eq:modtilden} we also get that the scalar product of the unit normal $N$ with the Reeb vector field $T$ is given by
\begin{equation}
\label{eq:nxt}
g(N,T)=-\frac{1}{\big(1+\escpr{\nabla u+F,b(\nabla  u+F)}\big)^{1/2}}.
\end{equation}
Hence we obtain from \eqref{eq:area}, \eqref{eq:dgu}, \eqref{eq:jac} and \eqref{eq:modnh}
\begin{equation}
\label{eq:agu}
A(G_u)=\int_\Om \escpr{\nabla u+F, b(\nabla u+F)}^{1/2}\,\frac{\det(g_{ij}+(\nabla u+F)_i(\nabla u+F)_j)^{1/2}}{\big(1+\escpr{\nabla u +F,b(\nabla u +F)}\big)^{1/2}}
\,d\mathcal{L}^{2n}.
\end{equation}

Now we use formula \eqref{eq:agu} to compute the mean curvature of a graph.

\begin{lemma}
\label{1variation:graph} 
Let us consider the contact sub-Riemannian manifold $(\mathbb{R}^{2n+1},g_{\mathcal{H}_0},\omega_0)$, where $\omega_0$ is the standard contact form in $\mathbb{R}^{2n+1}$ and $g_{\mathcal{H}_0}$ is a positive definite metric in  the horizontal distribution $\mathcal{H}_0$. Let $u:\Omega\subset \rr^{2n}\rightarrow \rr$ be a ${C}^2$ function.
We denote by $g=(g_{ij})_{i,j=1,\dots,2n}$ the metric matrix and by $b=g^{-1}=(g^{ij})_{i,j=1,\dots,2n}$ the inverse metric matrix. Then the mean curvature of the graph $G_u$, computed with respect to the downward pointing normal, is given by
\begin{equation}
\label{eq:hgu}
-\divv\bigg(\frac{b(\nabla u+F)}{\escpr{\nabla u+F,b(\nabla u+F)}^{1/2}}\bigg)+\mu,
\end{equation}
where $\mu$ is a bounded function in $\Om\setminus\Om_0$, and $\divv$ is the usual Euclidean divergence in $\Om$.
\end{lemma}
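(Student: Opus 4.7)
The plan is to derive formula~\eqref{eq:hgu} as the Euler-Lagrange equation of the area functional~\eqref{eq:agu} under vertical variations, comparing the resulting expression against the first variation formula~\eqref{eq:1stvar}.

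The first step is to simplify the integrand in~\eqref{eq:agu}. Writing $W:=\nabla u+F$, the matrix $g_{ij}+W_iW_j$ is a rank-one update of the metric matrix, so the matrix determinant lemma combined with $b=g^{-1}$ gives
$$\det(g_{ij}+W_iW_j)=\det(g_{ij})\,(1+\escpr{W,bW}).$$
Substituting this into~\eqref{eq:agu}, the factor $(1+\escpr{W,bW})^{1/2}$ cancels between numerator and denominator, and the area integral collapses to
$$A(G_u)=\int_\Om \det(g)^{1/2}\,\escpr{W,bW}^{1/2}\,d\mathcal{L}^{2n}.$$

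Next I would test this against a vertical variation $U:=\varphi T$ with $\varphi\in C^\infty_0(\Om\setminus\Om_0)$. The flow of $U$ sends $G_u$ to $G_{u+s\varphi+O(s^2)}$, so differentiating the simplified area integral and integrating by parts once yields
$$\frac{d}{ds}\bigg|_{s=0}A(G_{u+s\varphi})=-\int_\Om \varphi\,\divv\bigg(\det(g)^{1/2}\,\frac{bW}{\escpr{W,bW}^{1/2}}\bigg)\,d\mathcal{L}^{2n}.$$
On the other hand, rewriting~\eqref{eq:1stvar} as an integral over~$\Om$ by means of~\eqref{eq:dgu}, \eqref{eq:jac} and~\eqref{eq:nxt}, the same determinant identity produces $\text{Jac}/|\tilde N|=\det(g)^{1/2}$, and therefore
$$\frac{d}{ds}\bigg|_{s=0}A(G_{u+s\varphi})=\int_\Om H\,\varphi\,\det(g)^{1/2}\,d\mathcal{L}^{2n}.$$
Equating the two expressions for arbitrary $\varphi$, dividing by $\det(g)^{1/2}$, and expanding the divergence by the product rule gives exactly the form of~\eqref{eq:hgu} with
$$\mu=-\frac{1}{\det(g)^{1/2}}\,\escpr{\nabla\det(g)^{1/2},\frac{bW}{\escpr{W,bW}^{1/2}}}.$$

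It remains to verify that $\mu$ is bounded on $\Om\setminus\Om_0$, which is the step with actual content. Setting $\nu:=bW/\escpr{W,bW}^{1/2}$, the identity $gb=I$ gives
$$\escpr{\nu,g\nu}=\frac{\escpr{bW,gbW}}{\escpr{W,bW}}=\frac{\escpr{W,bW}}{\escpr{W,bW}}=1,$$
so $\nu$ has unit length in the Riemannian metric $g$. Because $g$ is smooth and positive definite, its eigenvalues are bounded away from $0$ and $\infty$ on compact subsets, so the Euclidean norm of $\nu$ is uniformly bounded there; the smooth factor $\nabla\det(g)^{1/2}/\det(g)^{1/2}$ is bounded as well, and these two facts together yield the claimed boundedness of $\mu$. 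The only real pitfall in the argument is keeping the signs straight: the downward orientation of $\tilde N$ in~\eqref{eq:tilden} makes $\escpr{T,N}$ negative in~\eqref{eq:nxt}, and this sign must be tracked consistently when comparing the two expressions for the first variation, but once that is fixed the remaining computation is routine.
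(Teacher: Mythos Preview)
Your approach is correct in spirit and genuinely different from the paper's. You apply the matrix determinant lemma to collapse the Jacobian factor, so that the area integrand in~\eqref{eq:agu} simplifies to $\det(g)^{1/2}\escpr{W,bW}^{1/2}$; this makes the Euler--Lagrange computation short and produces an explicit, visibly bounded $\mu$. The paper does not notice this simplification: it keeps the full integrand, introduces auxiliary functions $G(z,u,p)$ and $F(z,u,p)=\escpr{p+F,b(p+F)}^{1/2}G$, and then argues term by term that $F_u$, $\escpr{\nabla G,\cdot}$ and $\divv(\escpr{\cdot}^{1/2}G_p)$ are bounded. Your route is cleaner and yields more information.

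There is one oversight. The metric $g_{\mathcal H_0}$ is an arbitrary positive-definite metric on $\mathcal H_0$, so $g_{ij}$, $b$ and $\det g$ are functions of $(z,t)$; on the graph they become functions of $(z,u(z))$ (the paper notes this: ``Recall that $g_{ij}=g_{ij}(z,u)$''). When you differentiate $A(G_{u+s\varphi})$ at $s=0$, the variation of the $t$-slot contributes an additional zero-order term
\[
\int_\Omega \varphi\,\bigg(\partial_u(\det g)^{1/2}\,\escpr{W,bW}^{1/2}
+\det(g)^{1/2}\,\frac{\escpr{W,(\partial_u b)W}}{2\escpr{W,bW}^{1/2}}\bigg)\,d\mathcal L^{2n},
\]
which your displayed first-variation identity omits. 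This term, divided by $\det(g)^{1/2}$, is locally bounded on $\Omega\setminus\Omega_0$ (both summands are $O(|W|)$), so the conclusion of the lemma is unaffected; it simply adds a further bounded piece to $\mu$. The paper tracks the analogous contribution through its $F_u$. Once you restore this term, your argument is complete and tighter than the original.
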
 

\begin{proof}
Given a smooth function $v$ with compact support in $\Om$, we shall compute the first derivative of the function $s\mapsto A(G_{u+sv})$ and we shall compare it with the general first variation of the sub-Riemannian area \eqref{eq:1stvar}. Let us fix some compact set $K\subset\Om$.

We use the usual notation in Calculus of Variations. Let us denote by
\begin{equation}
\label{eq:defG}
G(z,u,p):=\frac{\det(g_{ij}+(p+F)_i(p+F)_j)^{1/2}_{i,j=1,\ldots,2n}}{\big(1+\escpr{p +F,b(p +F)}\big)^{1/2}},
\end{equation}
where $p\in\rr^{2n}$. Observe that $G$ is a $C^\infty$ function well defined in $\Om$. From \eqref{eq:jac} and \eqref{eq:nxt}we obtain
\begin{equation}
\label{eq:simplifiedg}
G(z,u,\nabla u):=-\text{Jac}\,g(T,N).
\end{equation}
Recall that $g_{ij}=g_{ij}(z,u)$, $F=F(z)$. Let us denote also
\begin{equation}
\label{eq:defF}
F(z,u,p):=\escpr{p+F, b(p+F)}^{1/2}\,G(z,u,p).
\end{equation}
Then we can write
\[
A(G_u):=\int_\Om F(z,u,\nabla u)\,d\mathcal{L}^{2n}.
\]
So we have
\[
\frac{d}{ds}\bigg|_{s=0} A(G_{u+sv})=\int_\Om (F_u\,v+\escpr{F_p,\nabla v}\,d\mathcal{L}^{2n},
\]
where $\escpr{F_p,X}(z,u,p)=\tfrac{d}{ds}\big|_{s=0} (z,u,p+sX)$ is the gradient of $p\mapsto F(z,u,p)$. Applying the Divergence Theorem
\begin{equation}
\label{eq:1stu+sv}
\frac{d}{ds}\bigg|_{s=0} A(G_{u+sv})=\int_\Om v\,(F_u-\divv F_p)\,d\mathcal{L}^{2n}.
\end{equation}
Observe that, from \eqref{eq:defF}
\[
F_u=\frac{\escpr{p+F, \tfrac{\ptl b}{\ptl u}(p+F)}}{2\,\escpr{p+F, b(p+F)}^{1/2}}\,G+\escpr{p+F, b(p+F)}^{1/2}\,G_u,
\]
which is bounded from above since $b$ is a symmetric positive definite matrix, and so there is $C>0$ depending on $K$ so that $\escpr{\nabla u+F,b(\nabla u+F)}\ge C\,|\nabla u+F|^2$, and the numerator satisfies $\escpr{\nabla u+F,\tfrac{\ptl b}{\ptl t}(\nabla u+F)}\le C'|\nabla u+F|^2$. On the other hand
\[
F_p=G\,\frac{b(p+F)}{\escpr{p+F, b(p+F)}^{1/2}}+\escpr{p+F, b(p+F)}^{1/2}\,G_p,
\]
so that
\begin{align*}
\divv F_p=G\,\divv\bigg(\frac{b(p+F)}{\escpr{p+F, b(p+F)}^{1/2}}\bigg)&+\big<\nabla G, \frac{b(p+F)}{\escpr{p+F, b(p+F)}^{1/2}}\big>
\\
&+\divv\big(\escpr{p+F, b(p+F)}^{1/2}\,G_p\big).
\end{align*}
Observe that the last two terms are bounded and that $G_p$ is bounded, so that we get from \eqref{eq:1stu+sv} and the previous discussion
\[
\frac{d}{ds}\bigg|_{s=0} A(G_{u+sv})=\int_\Om v\,\bigg\{G\,\divv\bigg(\frac{b(\nabla u+F)}{\escpr{\nabla u+F,b(\nabla u+F)}^{1/2}}\bigg)+\mu'\bigg\}\,d\mathcal{L}^{2n},
\]
where $G$ and $\mu'$ are bounded functions in $K$.

Taking into account that the variation $s\mapsto u+sv$ is the one obtained by moving the graph $G_u$ by the one-parameter group of diffeomorphisms associated to the vector field $U:=vT$, which has normal component $g(U,N)=v\,g(T,N)$, that $dG_u=\text{Jac}\,d\mathcal{L}^{2n}$, and equation \eqref{eq:simplifiedg}, we conclude
\[
\frac{d}{ds}\bigg|_{s=0} A(G_{u+sv})=\int_\Om g(U,N)\bigg\{-\,\divv\bigg(\frac{b(\nabla u+F)}{\escpr{\nabla u+F,b(\nabla u+F)}^{1/2}}\bigg)+\mu\bigg\}\,dG_u,
\]
where $\mu:=\mu' (g(N,T)\,\text{Jac})^{-1}$ is a bounded function. Comparing this formula with the general first variation one \eqref{eq:1stvar}, and taking into account that $g(U,N)$ is arbitrary we get \eqref{eq:hgu}.
\end{proof}

\begin{remark}
 If $g=g_0$ is the standard Riemannian metric in the Heisenberg group so that $\{X_1,Y_1,\ldots,X_n,Y_n,T\}$ is orthonormal then $(g_{ij})_{i,j=1,\ldots,2n}$ is the identity matrix, $b=\text{Id}$, $\mu=0$,  and we have the usual mean curvature equation, see \cite{CHY}.
\end{remark} 
 
\begin{lemma} 
\label{lem:foliation}
Let $(M,g_\mathcal{H},\omega)$ be a contact sub-Riemannian manifold. Given $p\in M$, there exists a neighborhood $U$ of $p$ so that $U\setminus\{p\}$ is foliated by surfaces with mean curvature uniformly bounded outside any neighborhood $V\subset U$ of $p$.
\end{lemma}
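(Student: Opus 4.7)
The plan is to transport the foliation by Pansu's constant mean curvature spheres from a punctured neighborhood of the origin in $\hhn$ to $M$ via Darboux's Theorem, and then to bound the mean curvature of the transported leaves using the graph formula of Lemma~\ref{1variation:graph}.

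First I would apply Darboux's Theorem to obtain a strict contact diffeomorphism $\phi:U\to W\subset\rrn$ with $\phi(p)=0$ and $\phi^{*}\omega_{0}=\omega$, where $U$ is a neighborhood of $p$. The metric $g_{\hhh}$ pushes forward to a smooth positive definite metric $g_{\hhh_{0}}:=\phi_{*}g_{\hhh}$ on $\hhh_{0}=\ker\omega_{0}$; since $\phi$ is a sub-Riemannian isometry it suffices to build the foliation and bound mean curvature in the model $(W,\omega_{0},g_{\hhh_{0}})$. In the standard Heisenberg group $(\rrn,\omega_{0},g^{\mathrm{Heis}})$, the family of dilated unit Pansu spheres $S_{\la}:=h_{\la}(S_{1})$, $\la>0$, is a smooth foliation of $\rrn\setminus\{0\}$ whose leaves have constant horizontal mean curvature $c_{n}/\la$ in the standard Heisenberg metric. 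Shrinking $W$ if necessary, I choose $\la_{0}>0$ so that $S_{\la_{0}}$ together with its interior lies in $W$; then $\{S_{\la}\}_{0<\la\le\la_{0}}$ foliates a punctured neighborhood of $0$, and pulling back via $\phi^{-1}$ yields the claimed foliation of $U\setminus\{p\}$.

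The core step is to bound the horizontal mean curvature $H_{\la}$ of $S_{\la}$ computed in the metric $g_{\hhh_{0}}$, uniformly over all leaves meeting a fixed compact $K\subset W\setminus\{0\}$. The singular set of $S_{\la}$ consists of its two poles on the $t$-axis; away from these and from the equator $|z|=\la$, each hemisphere is a $C^{\infty}$ graph $t=u_{\la}(z)$ and Lemma~\ref{1variation:graph} gives
\begin{equation*}
H_{\la}=-\divv\!\bigg(\frac{b(\nabla u_{\la}+F)}{\escpr{\nabla u_{\la}+F,b(\nabla u_{\la}+F)}^{1/2}}\bigg)+\mu_{\la},
\end{equation*}
with $b=(g_{ij})^{-1}$ smooth in $(z,t)$ and $\mu_{\la}$ bounded on compacts. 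In the standard Heisenberg case $b=I$, $\mu=0$ the divergence on the right is exactly $c_{n}/\la$. Writing $b=I+B$ with $B$ smooth, I would split the divergence into this standard contribution plus a remainder involving $B$, and bound the remainder using the fact that both $(\nabla u_{\la}+F)/|\nabla u_{\la}+F|$ and $b(\nabla u_{\la}+F)/\escpr{\nabla u_{\la}+F,b(\nabla u_{\la}+F)}^{1/2}$ are $C^{\infty}$ unit vector fields on the regular set of $S_{\la}$, with derivatives uniformly bounded on any compact of $W\setminus\{0\}$. Near the equator one switches to a graph representation over another coordinate hyperplane and repeats the argument. Since only leaves with $\la\ge\la_{1}>0$ meet $K$, the main term $c_{n}/\la\le c_{n}/\la_{1}$ is uniform, and together with the bounded remainder this gives the required uniform bound on $|H_{\la}|$.

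The hardest part will be making the perturbation estimate of the divergence rigorous near the equator, where $|\nabla u_{\la}|\to\infty$ while the unit horizontal directions and hence the integrand in the divergence remain bounded. One has to verify that the cancellations responsible for constancy of the standard Heisenberg divergence are perturbed only by a bounded amount when the ambient metric is changed from the standard Heisenberg one to $g_{\hhh_{0}}$, which should follow from the explicit form of the Pansu sphere function $u_{\la}$ combined with smoothness of $B$ on compact subsets of $W$.
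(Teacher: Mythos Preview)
Your overall strategy—Darboux chart, Pansu spheres, the graph formula of Lemma~\ref{1variation:graph}—is exactly the paper's. The gap is that you have misplaced the difficulty and, as a result, asserted something false at the crucial point.

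The equator is \emph{not} the hard part. There the sphere is $C^2$, the horizontal normal does not degenerate (indeed $N$ is horizontal there, so $|N_h|=1$), and the mean curvature is a continuous function of the point; changing to a graph over another hyperplane is merely a chart change. The poles are the hard part: a compact $K\subset W\setminus\{0\}$ typically contains poles of the leaves that meet it, and your claim that $(\nabla u_\lambda+F)/|\nabla u_\lambda+F|$ and its $b$-version have \emph{derivatives uniformly bounded on any compact of $W\setminus\{0\}$} fails precisely there. Near a pole one has $\nabla u_\lambda+F=F+O(|z|^2)$ with $|F|=|z|$, so the unit field is $F/|z|+O(|z|)$; the partial derivatives of $F/|z|$ are of order $1/|z|$. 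Your perturbation remainder therefore contains $1/|z|$-terms and cannot be bounded simply by appealing to bounded derivatives.

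What actually makes the divergence bounded is a cancellation coming from the contact structure. Expanding $\divv\big(bw/\escpr{w,bw}^{1/2}\big)$ with $w=\nabla u_\lambda+F$, the dangerous contributions involve $\ptl_i w_j=u_{ij}+\ptl_i F_j$; the $\ptl_i F_j$ pieces drop out because $\ptl_i F_j+\ptl_j F_i=0$ and the coefficients $g^{ij}$ are symmetric, leaving only the Hessian pieces. An explicit calculation from the formula for $u_\lambda$ then gives $|u_{ij}|\le C|z|$ while $|\nabla u_\lambda+F|\ge C_1|z|$, so $u_{ij}/|\nabla u_\lambda+F|$ is bounded. This works directly for an arbitrary smooth $b$, so the splitting $b=I+B$ is unnecessary; if you keep it, you must still exhibit this same antisymmetry-based cancellation in the $B$-part, which your outline omits.
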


\begin{proof}
Since the result is local, we may assume, using a Darboux's chart, that our contact sub-Riemannian manifold is $(\rr^{2n+1},g,\omega_0)$, where $\omega_0$ is the standard contact form in \eqref{eq:omega0} and $g$ is an arbitrary positive definite metric in the horizontal distribution $\mathcal{H}_0$. We also assume $p=0$.

For each $\la>0$, we consider the hypersurface $\sph_\la$ given by the graph of the function
\begin{equation}
\label{eq:ulambda}
u_\la(z)=\frac{1}{2\lambda^2}\{ \lambda |z|(1-\lambda^2|z|^2)^{1/2}+\arccos (\lambda |z|)  \}, \quad |z|\leq \frac{1}{\lambda},
\end{equation}
and its reflection with respect to the hyperplane $t=0$, see \cite{Ri2}. Each $\sph_\la$ is a topological sphere of class $C^2$ with constant mean curvature $\la$ in the Heisenberg group $\hh^{2n+1}$ and two singular points $\pm(0,\pi/(4\la^2))$. The family $\{\sph_{\lambda}\}_{\la>0}$ is a foliation of $\rr^{2n+1}\setminus\{0\}$. From now on we fix some $\la>0$ and let $u:=u_\la$.

From Lemma~\ref{1variation:graph} it is sufficient show that
\begin{equation}\label{eq:meancur}
\divv\left(\frac{b(\nabla u+F)}{\escpr{\nabla u+F,b(\nabla u+F)}^{1/2}}\right)
\end{equation}
is bounded near the singular points, In fact the mean curvature is continuous away from the singular set by the regularity of $\sph_\lambda$.

Let $g^i:=(g^{i1},\ldots, g^{i(2n)})$ be the vector in $\rr^{2n}$ corresponding to the $i$-th row of the matrix $b$. We have 
\[
\divv\left(\frac{b(\nabla u+F)}{\escpr{\nabla u+F,b(\nabla u+F)}^{1/2}}\right)=\sum_{i=1}^{2n}\ptl_i\left(\frac{\escpr{g^i,\nabla u +F}}{\escpr{\nabla u+F,b(\nabla u+F)}^{1/2}}\right),
\]
where $\partial_i$ is the partial derivative with respect the $i$-th variable, i.e., $x_{(i+1)/2}$ when $i$ is odd and $y_{i/2}$ when $i$ is even. Taking derivatives we get
\begin{multline*}
\sum_{i=1}^{2n}\ptl_i\left(\frac{\escpr{g^i,\nabla u +F}}{\escpr{\nabla u+F,b(\nabla u+F)}^{1/2}}\right)=
\sum_{i=1}^{2n}\frac{\escpr{\ptl_ig^i,\nabla u+F}+\escpr{g^i,\ptl_i(\nabla u+F)}}{\escpr{\nabla u+F,b(\nabla u+F)}^{1/2}}
\\
-\escpr{g^i,\nabla u +F}\,\frac{\tfrac{1}{2}\,\escpr{\nabla u+F,(\ptl_ib)(\nabla u+F)}+\escpr{\ptl_i(\nabla u+F),b(\nabla u+F)}}{\escpr{\nabla u+F,b(\nabla u+F)}^{3/2}}.
\end{multline*}
It is clear that the first and the third summands are bounded. So we only have to prove that
\begin{multline}
\label{eq:step1}
\sum_{i=1}^{2n}\escpr{g^i,\ptl_i(\nabla u+F)}\escpr{\nabla u+F,b(\nabla u+F)}
\\
-\sum_{i,j,k,\ell=1}^{2n}\escpr{g^i,\nabla u+F}\escpr{\ptl_i(\nabla u+F),b(\nabla u+F)}\le C\,|\nabla u+F|^{3/2}
\end{multline}
for some positive constant $C$. We easily see that the left side of \eqref{eq:step1} is equal to
\begin{multline}
\label{eq:step2}
\sum_{i,j,k,\ell=1}^{2n}g^{ij}g^{k\ell}\ptl_i(\nabla u+F)_j(\nabla u+F)_k(\nabla u+F)_\ell
\\
-\sum_{i,j,k,\ell=1}^{2n} g^{ij}g^{k\ell}\ptl_i(\nabla u+F)_k (\nabla u+F)_j(\nabla u+F)_\ell.
\end{multline}
Taking into account that
\[
\ptl_i F_j+\ptl_jF_i=0, \quad \text{for all}\ i,j=1,\ldots,2n,
\]
and the symmetries of \eqref{eq:step2}, we get that \eqref{eq:step2} is equal to 
\[
\sum_{i,j,k,\ell=1}^{2n}g^{ij}g^{k\ell}u_{ij}(\nabla u+F)_k(\nabla u+F)_\ell
-\sum_{i,j,k,\ell=1}^{2n} g^{ij}g^{k\ell}u_{ik}(\nabla u+F)_j(\nabla u+F)_\ell,
\]
so we only need to show that each term
\begin{equation*}
\frac{ (\nabla u+F)_k (\nabla u+F)_\ell\, u_{ij}}{\langle \nabla u+F,b(\nabla u+F)\rangle^{3/2}}
\end{equation*}
is bounded to complete the proof. Since
\[
|(\nabla u+F)_k|\le |\nabla u +F|, \ \text{and }\escpr{\nabla u+F,b(\nabla u+F}^{1/2}\ge C |\nabla u+F|,
\] 
for some positive constant $C>0$, it is enough to show that
\begin{equation}
\label{eq:uij}
\frac{u_{ij}}{|\nabla u+F|}
\end{equation}
is bounded.

A direct computation yields
\[
\frac{\ptl u}{\ptl x_i}=-\frac{\la |z|x_i}{(1-\la^2|z|^2)^{1/2}},
\qquad
\frac{\ptl u}{\ptl y_i}=-\frac{\la |z|y_i}{(1-\la^2|z|^2)^{1/2}},
\]
and so
\[
|\nabla u+F|^2=\sum_{i=1}^n \bigg(\frac{\ptl u}{\ptl x_i}-y_i\bigg)^2+\bigg(\frac{\ptl u}{\ptl y_i}+x_i\bigg)^2=|z|^2\bigg(1+\frac{\la^2|z|^2}{1-\la^2|z|^2}\bigg).
\]
Hence 
\begin{equation}
\label{eq:nablau+f}
C_1|z|\le|\nabla u+F|\le C_2|z|,
\end{equation}
for some constants $C_1$, $C_2>0$ near $z=0$.

On the other hand
\begin{align*}
\frac{\ptl^2u}{\ptl x_i\ptl x_j}&=-\delta_{ij}\,\frac{\lambda |z|}{(1-\la^2|z|^2)^{1/2}}-\frac{\lambda x_ix_j}{|z|(1-\la^2|z|^2)^{3/2}},
\\
\frac{\ptl^2u}{\ptl y_i\ptl y_j}&=-\delta_{ij}\,\frac{\lambda |z|}{(1-\la^2|z|^2)^{1/2}}-\frac{\lambda y_iy_j}{|z|(1-\la^2|z|^2)^{3/2}},
\\
\frac{\ptl^2u}{\ptl x_iy_j}&=-\frac{\lambda x_iy_j}{|z|(1-\la^2|z|^2)^{3/2}},
\end{align*}
and so
\[
|u_{ij}|\le C|z|,
\]
for some constant $C>0$. This inequality, together with \eqref{eq:nablau+f}, shows that \eqref{eq:uij} is bounded.

%
\end{proof}

\begin{lemma}[Deformation Lemma]
\label{addingperimeter} 
Let $(M,g_\mathcal{H},\omega)$ be a contact sub-Riemannian~manifold and $\Om\subset M$ a finite perimeter set. Then there exists a small deformation $\widetilde{\Om}_r\supset\Om$, $0<r\le r_0$, such that
\begin{equation*}
\per(\partial(\widetilde{\Om}_r-\Om))\leq C\,|\widetilde{\Om}_r-\Om|,
\end{equation*}
where $C$ is a positive constant.
\end{lemma}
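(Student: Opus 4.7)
The plan is to construct $\widetilde\Om_r$ by attaching to $\Om$ a topological ball enclosed by one leaf of the Pansu-sphere foliation of Lemma~\ref{lem:foliation}, and to estimate the sub-Riemannian perimeter of the added region through the Gauss-Green Theorem applied to a horizontal unit vector field built from that foliation. Concretely, pick a point $p\in M\setminus\overline\Om$ and use a Darboux chart centered at $p$ to obtain the foliation $\{\Sg_\la\}_{\la>0}$ of a punctured neighborhood $U\setminus\{p\}$; each leaf bounds a ball $B_\la\ni p$. After shrinking $U$ so that $U\cap\Om=\emptyset$, fix a smaller neighborhood $V\subset U$ of $p$. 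By Lemma~\ref{lem:foliation}, the mean curvature of every leaf meeting $U\setminus V$ is bounded by some $H_0$; let $r\in(0,r_0]$ parametrize continuously the family of leaves $\Sg_{\la(r)}$ whose enclosed ball $B_{\la(r)}$ stays in $U\setminus V$.

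Off the singular axis of the foliation (a set of zero measure), define the horizontal vector field $\xi(q):=\nu_h(q)$, the horizontal unit normal to the leaf through $q$ pointing away from $p$. The ambient divergence decomposes along each leaf as $\divv\xi=\divv_{\Sg_\la}\nu_h+\escpr{D_{\tilde N}\xi,\tilde N}$, where $\tilde N$ is a Riemannian unit normal to $\Sg_\la$. The identity $\divv_{\Sg_\la}\nu_h=-H+\escpr{\tilde N,T}\,\sg(\nu_h,\tilde N)$ from \eqref{eq:hdivsgfinal}, combined with $|H|\le H_0$ on $U\setminus V$ and the boundedness of the smooth tensor $\sg$, controls the tangential part; the transverse term $\escpr{D_{\tilde N}\xi,\tilde N}$ is controlled by the $C^\infty$ dependence of the foliation on the transverse parameter. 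Hence $|\divv\xi|\le C$ on $U\setminus V$.

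Set $\widetilde\Om_r:=\Om\cup B_{\la(r)}$ for $r\in(0,r_0]$; since $B_{\la(r)}\cap\Om=\emptyset$, we have $\widetilde\Om_r-\Om=B_{\la(r)}$, and on its measure-theoretic boundary $\Sg_{\la(r)}$ the outward horizontal unit normal of $B_{\la(r)}$ agrees with $\xi$. Applying the Gauss-Green Theorem for finite-perimeter sets to $\xi$ on $B_{\la(r)}$,
\[
\per(\widetilde\Om_r-\Om)=\per(B_{\la(r)})=\bigg|\int_{B_{\la(r)}}\divv\xi\,dv_g\bigg|\le C\,|B_{\la(r)}|=C\,|\widetilde\Om_r-\Om|,
\]
which is the claimed inequality.

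The hard step is the divergence estimate in the second paragraph: Lemma~\ref{lem:foliation} gives only the leaf-tangential trace of $D\nu_h$ (the mean curvature $H$), while we also need a uniform bound on the transverse component $\escpr{D_{\tilde N}\xi,\tilde N}$ across all leaves that stay in $U\setminus V$. The explicit derivative estimates $|\nabla u_\la+F|\asymp|z|$ and $|u_{ij}|\le C|z|$ established inside the proof of Lemma~\ref{lem:foliation}, together with the contact structure of Darboux's chart, make this uniform bound possible; the final constant $C$ thus depends on the choice of $V$ and hence on $r_0$.
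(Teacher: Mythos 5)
Your construction places the center $p$ of the Pansu foliation in $M\setminus\overline\Om$ and attaches to $\Om$ a whole disjoint ball $B_{\la(r)}$. This cannot yield the inequality with a fixed constant $C$ in the regime that is actually needed. In the applications (Lemma~\ref{lem:bounded}, where the adjusted volume $V(r)\to 0$ as $r\to\infty$, and Proposition~\ref{prop:minseq}, where one makes ``small corrections of the volume''), $|\widetilde\Om_r-\Om|$ must be taken arbitrarily small. But the leaf $\sph_\la$ has constant mean curvature $\la$, and the ball it bounds has Carnot--Carath\'eodory radius $\sim 1/\la$, so $P(B_\la)/|B_\la|\sim\la\to\infty$ as $|B_\la|\to 0$. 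Equivalently, your side condition ``$B_{\la(r)}$ stays in $U\setminus V$'' is unsatisfiable, because every $B_\la$ in the foliation contains $p\in V$; if one keeps instead the \emph{leaves} $\Sg_{\la(r)}$ in $U\setminus V$, then $\la$ is bounded above by a constant depending on $V$ and the enclosed balls have volume bounded \emph{below}, so small volume corrections are never reached. In either reading you have a family of deformations whose added volume is bounded away from zero, which is not the content of the Deformation Lemma and does not serve its later uses.

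The paper's proof avoids this by placing $p\in\intt(\Om)$ close to $\ptl\Om$ and taking $\widetilde\Om_r:=\Om\cup U_r$, so that the added region $\Om_r=\Om^c\cap U_r$ lies at distance at least $d(p,\ptl\Om)>0$ from the singular center $p$ for every $r$. As $r\downarrow d(p,\ptl\Om)$ one has $|\Om_r|\to 0$, while the leaves sweeping $\Om_r$ stay outside a fixed neighborhood of $p$, so by Lemma~\ref{lem:foliation} their mean curvature (hence $\divv\nu_h$, via \eqref{eq:hdivsgfinal}) is uniformly bounded there. The Divergence Theorem is then applied on $\Om_r$, not on a whole ball, using $g_\mathcal{H}(\nu_h,\nu)=1$ on $F_r\cap\Om^c$ and $g_\mathcal{H}(\nu_h,\nu)\ge -1$ on $\ptl^*\Om_r\cap U_r$, which gives $\per(F_r\cap\Om^c)-\per(\ptl^*\Om_r\cap U_r)\le C|\Om_r|$ and hence a perimeter increment controlled by $C|\widetilde\Om_r-\Om|$ with $C$ genuinely uniform as the added volume shrinks. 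Your second paragraph's divergence decomposition is sound (in fact the transverse term equals $-\escpr{\tilde N,T}\,\sg(\tilde N,\nu_h)$, which is bounded by the smoothness of $\sg$ alone), but that correct estimate is applied to the wrong region.
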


\begin{proof}
For $p\in\intt(\Om)$ sufficiently close to $\partial\Om$, there exists by Lemma~\ref{lem:foliation} a local foliation by hypersurfaces $F_r$, $0<r\le r_0$, with mean curvature uniformly bounded outside a small neighborhood of $p$. Let $U_r$ the regions bounded by $F_r$  and let $\nu_h(q)$ the horizontal unit normal at $q\in F_r$ of the surface $F_r$, for $r\in[d(p,\partial \Omega), r_0]$.
Letting $\Om_r:= \Om^c\cap U_r$, $\widetilde{\Om}_r:=\Om_r\cup\Om$, we have that there exists $C>0$ so that $\divv\nu_h\le C$ by \eqref{eq:hdivsgfinal} and the boundedness of the mean curvature. So we have
\begin{equation*}
\begin{aligned}
C |\Om_r|&\geq \int\limits_{\Om_r} \divv (\nu_h) dv_g=\int\limits_{M} g_{\mathcal{H}}(\nu_h,\nu)dP(F_r\cap \Omega^c)+\int\limits_{M} g_{\mathcal{H}}(\nu_h,\nu) dP(\partial^*\Om_r\cap U_r)\\
&\geq \per(\partial F_r\cap \Om^c)-\per(\partial^*\Om_r\cap U_r),
\end{aligned}
\end{equation*}
where $\nu$ is defined in the Gauss-Green formula. We have used $g_{\mathcal{H}}(\nu_h,\nu)\equiv 1$ in the first integral and $g_{\mathcal{H}}(\nu_h,\nu)\geq -1$ in the second one. But also that from the definition of $dP(.)$ it follows  
\begin{equation*}
\int\limits_{\Omega}dP(E)=P(E,\Omega),
\end{equation*}
see \cite[p.~879--880]{FSSC5} and \cite[p.~491--494]{FSSC4}.
\end{proof}

\begin{lemma}
\label{lem:bounded}
Let $(M,g_\mathcal{H},\omega)$ be a contact sub-Riemannian manifold, and  $E\subset M$ be a set minimizing perimeter under a volume constraint. Then $E$ is bounded. 
\end{lemma}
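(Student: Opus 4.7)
The plan is to argue by contradiction: assume $E$ is unbounded and build a competitor with the same volume but strictly smaller perimeter. Fix a base point $p\in M$ and consider the non-increasing tail function $m(r):=|E\setminus B(p,r)|$, which satisfies $m(r)\to 0$ as $r\to\infty$ since $|E|<\infty$. If $E$ is unbounded then $m(r)>0$ for every $r$, and the goal is to produce a differential inequality for $m$ forcing it to vanish in finite distance.

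First I would derive an upper bound on $P(E\setminus B_r)$ via the Deformation Lemma~\ref{addingperimeter}. Pick an interior point $p^\star$ of $E\cap B_{r/2}$ close to $\partial E$; for large $r$ and correspondingly small $m(r)$, continuity of the added volume in the deformation parameter allows the construction of $\widetilde\Omega\supset E\cap B_r$ with $|\widetilde\Omega|=|E|$ and
\[
P(\widetilde\Omega)\le P(E\cap B_r)+C\,m(r).
\]
Minimality of $E$ yields $P(E)\le P(\widetilde\Omega)$. Combining this with the almost-everywhere perimeter decomposition $P(E)=P(E,B_r)+P(E,M\setminus\overline{B_r})$, the local identity $P(E\cap B_r)=P(E,B_r)+P(E\cap B_r,\partial B_r)$, and the slicing bounds $P(E\cap B_r,\partial B_r),\,P(E\setminus B_r,\partial B_r)\le -m'(r)$ provided by the proposition in Section~\ref{sec:preliminaries}, I obtain
\[
P(E\setminus B_r)\le C\,m(r)-2m'(r)
\]
for a.e.\ sufficiently large $r$.

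Next, for a lower bound, once $r$ is large enough that $m(r)<v_0$, the isoperimetric inequality for small volumes from Lemma~\ref{lem:isopsmall} yields
\[
P(E\setminus B_r)\ge C_I\,m(r)^{(Q-1)/Q}.
\]
Comparing the two estimates and using $(Q-1)/Q<1$ to absorb the linear term $C\,m(r)$ into half of the sublinear right-hand side (legitimate once $m(r)$ is small), one arrives at
\[
-m'(r)\ge c\,m(r)^{(Q-1)/Q}
\]
for some $c>0$ on an interval $[r_1,\infty)$. Equivalently, $\tfrac{d}{dr}m(r)^{1/Q}\le -c/Q$, so $m(r)^{1/Q}$ decreases at a uniform linear rate and must reach zero at some finite $r_2$, contradicting the assumption that $m(r)>0$ for all $r$.

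The main obstacle is to apply the Deformation Lemma with all quantities properly matched: one must select the base point $p^\star$ so that the Pansu-sphere foliation of Lemma~\ref{lem:foliation} is contained in $B_r$ and reaches only regions where added volume exchanges with $E^c$, and verify that the deformation parameter can be tuned continuously to produce exactly the missing volume $m(r)$. The bookkeeping of the perimeter across $\partial B_r$, which is where the $-2m'(r)$ term originates, must be carried out for a.e.\ $r$ so that the final integration of the ODE is valid. This step is the only one in which the contact sub-Riemannian structure is essential.
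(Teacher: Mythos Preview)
Your proposal is correct and follows essentially the same argument as the paper: both set up the tail volume $m(r)=|E\setminus B_r|$, combine the Deformation Lemma with minimality of $E$ and the small-volume isoperimetric inequality to obtain $C_I\,m(r)^{(Q-1)/Q}\le C\,m(r)-2m'(r)$, absorb the linear term, and integrate the resulting differential inequality for $m^{1/Q}$ to a contradiction. The only cosmetic difference is that you phrase the intermediate step as an upper bound on $P(E\setminus B_r)$ while the paper chains the inequalities directly, but the underlying estimates and the use of the slicing bound $P(E\cap B_r,\partial B_r)\le -m'(r)$ are identical.
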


\begin{proof}
We fix $p\in M$ and denote the ball $B(p,r)$ by $B_r$. We let $V(r):=|E\cap (M\setminus B_r)|$, so that $V(r)\to 0$ when $r\to\infty$ since $E$ has finite volume. Let us assume that $V(r)>0$ for all $r>0$. Applying the isoperimetric inequality for small volumes when $r$ is large enough to the set $E\cap (M\setminus B_r)$ we get, taking $q$ as in \eqref{eq:defq},
\begin{align}
\label{eq:isopV(r)}
C_I\,V(r)^q&\le\per(E\cap (M\setminus B_r))
\\ \notag
&\le\per(E,M\setminus\overline{B}_r)+\per(E\cap B_r,\ptl B_r)
\\ \notag
&\le\per(E,M\setminus\overline{B}_r)+|V'(r)|
\\ \notag
&\le\per(E)-\per(E,B_r)+|V'(r)|.
\end{align}

We now fix some $r_0>0$. For $r>r_0$, the Deformation Lemma shows the existence of a set ${E}_r$ so that $E_r$ is a small deformation of $E\cap B_r$, ${E}_r\setminus (E\cap B_r)$ is properly contained in $B_{r_0}$, $|E_r|=|E|$ (which implies $|E\setminus E_r|=V(r)$), and $\per(E_r,B_r)\le\per(E,B_r)+C\,V(r)$. So we have
\begin{align}
\label{eq:perEr}
\per(E_r)&\le\per(E_r,B_r)+\per(E_r\cap B_r,\ptl B_r)
\\ \notag
&=\per(E_r,B_r)+\per(E\cap B_r,\ptl B_r)
\\ \notag
&\le\per(E_r,B_r)+|V'(r)|
\end{align}
By the isoperimetric property of $E$ we also have
\begin{equation}
\label{eq:isopE}
\per(E)\le\per(E_r)\le\per(E_r,B_r)+|V'(r)|,
\end{equation}
for all $r\ge r_0$.

From \eqref{eq:isopV(r)}, \eqref{eq:perEr} and \eqref{eq:isopE} we finally get
\begin{equation}
\label{eq:ineqvr}
C_IV(r)^q\le C\,V(r)+2\,|V'(r)|.
\end{equation}
Since $V(r)=V(r)^{1-q}V(r)^q\le (C_I/2)\,V(r)^q$ for $r$ large enough, we get
\[
-\frac{C_I}{2}\,V(r)^q\ge 2\,V'(r),
\]
or, equivalently,
\[
(V^{1/Q})'\le -\frac{C_I Q}{2}<0,
\]
which forces $V(r)$ to be negative for $r$ large enough. This contradiction proves the result.
\end{proof}

\section{Structure of minimizing sequences}
\label{sec:structure}

In this section we will prove an structure result for minimizing sequences in a non-compact contact sub-Riemannian manifold. Partial versions of this result were obtained for Riemannian surfaces, \cite{MR1883725}, \cite{MR1857855}, and for Riemannian manifolds \cite{Ri-Ro2}.

\begin{proposition}
\label{prop:minseq}
Let $(M,g_\mathcal{H},\omega)$ be a non-compact contact sub-Riemannian manifold. Consider  a minimizing sequence $\{E_k\}_{k\in\mathbb{N}}$ of sets of volume $v$ converging in $L^1_{loc}(M)$ to a finite perimeter set $E\subset M$, eventually empty. Then there exist sequen\-ces of finite perimeter sets $\{E^c_k\}_{k\in\mathbb{N}}$, $\{E^d_k\}_{k\in\mathbb{N}}$ such that
\begin{enumerate}
\item\label{one} $\{E_k^c\}_{k\in\nn}$ converges to $E$ in $L^1(M)$, $\{E_k^d\}_{k\in\nn}$ diverges, and $|E_k^c|+|E^d_k|=v$. 
\item\label{two} $\lim_{k\rightarrow\infty} \per(E_k^c)+\per(E_k^d)=I_M(v)$.
\item\label{three} $\lim_{k\to\infty}\per(E^c_k)=\per(E)$. 
\item\label{four} If $|E|\neq 0$, then $E$ is an isoperimetric region of volume $|E|$.
\item\label{five} Moreover, if $M/\Isom_\omega(M,g)$ is compact then $\lim_{k\to\infty}P(E_k^d)=I_M(v-|E|)$. In particular, $I_M(v)=I_M(|E|)+I_M(v-|E|)$.
\end{enumerate}
\end{proposition}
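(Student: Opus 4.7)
Proof plan. The strategy is to truncate each $E_k$ at a carefully chosen radius and then follow the Riemannian scheme of Ritor\'e--Rosales. Fix a basepoint $p \in M$, write $B_r := B(p, r)$, and set $v_k(r) := |E_k \cap B_r|$. Each $v_k$ is nondecreasing on $[0, \infty)$ with $\int_0^\infty v_k'(r)\,dr \le v$. Since $E_k \to E$ in $L^1_{loc}(M)$ and $|E \setminus B_r| \to 0$ as $r \to \infty$, a diagonal argument produces $R_k \to \infty$ with $|(E_k \triangle E) \cap B_{R_k}| \to 0$. Choosing $L_k \to \infty$ sufficiently slowly and picking $r_k \in [R_k, R_k + L_k]$ with $v_k'(r_k) \le v/L_k$, we obtain $r_k \to \infty$ such that simultaneously $v_k'(r_k) \to 0$ and $|(E_k \triangle E) \cap B_{r_k}| + |E \setminus B_{r_k}| \to 0$. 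Define $E_k^c := E_k \cap B_{r_k}$ and $E_k^d := E_k \setminus B_{r_k}$.

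Parts \eqref{one} and \eqref{two} then follow routinely from this choice. Volume additivity is immediate, the triangle inequality $|E_k^c \triangle E| \le |(E_k \triangle E) \cap B_{r_k}| + |E \setminus B_{r_k}|$ gives $E_k^c \to E$ in $L^1(M)$, and the inclusion $E_k^d \subset M \setminus B_{r_k}$ shows divergence. For the perimeter bound, the localization proposition of Section~\ref{sec:preliminaries} yields $P(E_k^c, \partial B_{r_k}),\, P(E_k^d, \partial B_{r_k}) \le v_k'(r_k)$ for a.e.\ $r_k$, and the finite Borel measure $P(E_k,\cdot)$ vanishes on $\partial B_r$ off a countable set, so
\[
P(E_k^c) + P(E_k^d) \le P(E_k) + 2\, v_k'(r_k) \to I_M(v).
\]
The reverse inequality follows from subadditivity of the perimeter under the disjoint union $E_k = E_k^c \cup E_k^d$, which gives \eqref{two}.

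Parts \eqref{three}--\eqref{five} rest on producing an upper bound for $\limsup P(E_k^c)$ matching the lower-semicontinuity bound $\liminf P(E_k^c) \ge P(E) \ge I_M(|E|)$. I would take a near-minimizer $F$ for the value $I_M(|E|)$, which may be assumed bounded by combining Lemma~\ref{lem:bounded} with truncation, apply the Deformation Lemma~\ref{addingperimeter} to modify its volume to exactly $|E_k^c|$ at a perimeter cost $O(\big||E_k^c|-|E|\big|)$, and localize the modified set well inside a ball $B_{r_k/2}$, hence strictly separated from $E_k^d$. The competitor then has volume $v$ and perimeter, up to $o(1)$, equal to $P(F) + P(E_k^d)$, so the isoperimetric property of $\{E_k\}$ forces $\liminf P(E_k^d) \ge I_M(v) - I_M(|E|)$; combined with \eqref{two} this gives $\limsup P(E_k^c) \le I_M(|E|)$, whence $\lim P(E_k^c) = P(E) = I_M(|E|)$, proving \eqref{three} and \eqref{four}. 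For \eqref{five}, the inequality $P(E_k^d) \ge I_M(|E_k^d|)$ with $|E_k^d| \to v-|E|$ combines with the lower semicontinuity of $I_M$ at $v-|E|$ and the subadditivity $I_M(v) \le I_M(|E|) + I_M(v-|E|)$ to force $\lim P(E_k^d) = I_M(v-|E|)$; both properties of $I_M$ are proved in Section~\ref{sec:structure} under the cocompactness hypothesis by using contact isometries to translate near-minimizers arbitrarily far apart.

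The main obstacle is the competitor construction in the proof of \eqref{three}--\eqref{four}: one must simultaneously adjust the volume of a near-minimizer for $I_M(|E|)$ via the Deformation Lemma, keep it compactly supported using Lemma~\ref{lem:bounded}, and exploit the divergence of $\{E_k^d\}$ to ensure perimeter additivity across the disjoint pieces. The cocompactness hypothesis in \eqref{five} enters only to upgrade the inequality form of \eqref{three}--\eqref{four} to the exact profile identity, via subadditivity of $I_M$ produced by translating far-apart near-minimizers.
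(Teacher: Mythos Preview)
Your proposal is correct and follows essentially the same scheme as the paper: truncate each $E_k$ at a diverging radius chosen so that the boundary contribution $v_k'(r_k)$ is small, then run competitor arguments using the Deformation Lemma to adjust volumes. The radius selection and the proofs of \eqref{one}--\eqref{two} are virtually identical to the paper's.

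The one noteworthy difference is in the competitor for \eqref{three}--\eqref{four}. You take an abstract near-minimizer $F$ for $I_M(|E|)$, truncate it to make it bounded, and then adjust its volume; the paper instead uses $E$ itself, truncated at radii $\rho_k<r_k$ with small boundary term, which is automatically bounded and has the right limiting volume. The paper's choice is a little cleaner: it avoids your appeal to Lemma~\ref{lem:bounded} (which, strictly speaking, applies only to actual minimizers, not near-minimizers---truncation alone is what does the work in your argument), and it yields $\lim P(E_k^c)=P(E)$ directly, with \eqref{four} following by replacing $E$ with a competitor $F$ of the same volume and smaller perimeter. Your merged argument for \eqref{three}--\eqref{four} via the chain $\limsup P(E_k^c)\le I_M(|E|)\le P(E)\le\liminf P(E_k^c)$ is equally valid. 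For \eqref{five} you package the two halves as ``lower semicontinuity'' and ``subadditivity'' of $I_M$; the paper carries out these two arguments inline (volume-adjusting $E_k^d$ for one inequality, translating a near-minimizer far from the bounded set $E$ for the other), but the content is the same.
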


\begin{proof}
We fix a point $p\in M$ and we consider the balls $B(r):=B(p,r)$. Let $m(r):=|E\cap B(p,r)|$, $m_k(r):=|E_k\cap B(r)|$.

We can choose a sequence of diverging radii $r_k>0$ so that, considering a subsequence of $\{E_k\}_{k\in\nn}$, we would had
\begin{gather}
\label{eq:conv1}\int\limits_{B(r_k)}|\mathbf{1}_E-\mathbf{1}_{E_k}|\le\frac{1}{k},
\\
\label{eq:conv2}P(E_k\setminus B(r_k),\ptl B(r_k))\le\frac{v}{k}.
\end{gather}
In order to prove \eqref{eq:conv1} and \eqref{eq:conv2} we consider a sequence of radii $\{s_k\}_{k\in\nn}$ so that $s_{k+1}-s_{k}\ge k$ for all $k\in\nn$. Taking a subsequence of $\{E_k\}_{k\in\nn}$, we may assume that
\[
\int\limits_{B(s_{k+1})}|\mathbf{1}_E-\mathbf{1}_{E_k}|\le\frac{1}{k},
\]
so that \eqref{eq:conv1} holds for all $r\in (0,s_{k+1})$.  To prove \eqref{eq:conv2} we observe that $m_k(r)$ is an increasing function. By Lebesgue's Theorem
\[
\int\limits_{s_{k}}^{s_{k+1}} m'(r)\,dr\le m(s_{k+1})-m(s_k)\le v,
\]
which implies that there is a set of positive measure in $[s_k,s_{k+1}]$ so that $m'(r)\le\tfrac{v}{k}$. By Ambrosio's localization Lemma \cite[Lemma~3.5]{Am1} we have, for almost everywhere $r$,
\[
P(E_k\setminus B(r)),\ptl B(r))\le m_k'(r).
\]
This implies that there is $r_k\in [s_k,s_{k+1}]$ so that \eqref{eq:conv2} holds.

Now we define
\[
E_k^c:=E\cap B(r_k), \qquad E_k^d:=E\setminus B(r_{k+1}).
\]

Now we prove \ref{one}. Since $E$ has finite volume and \eqref{eq:conv1} holds we conclude that $\{E_k^c\}_{k\in\nn}$ converges in $L^1(M)$ to $E$. The divergence of the sequence $\{E_k^d\}_{k\in\nn}$ and equality $|E_k^c|+|E_k^d|=v$ follow from the definitions of $E_k^c$ and $E_k^d$.

In order to prove \ref{two} we take into account that
\begin{align*}
P(E_k^c)&\le P(E_k,B(r_k))+P(E_k\cap\ptl B(r_k),\ptl B(r_k)),
\\
P(E_k^d)&\le P(E_k,M\setminus \overline{B}(r_k))+P(E_k\cap\ptl B(r_k),\ptl B(r_k)).
\end{align*}
By \eqref{eq:conv2} we have
\[
P(E_k)\le P(E_k^c)+P(E_k^d)\le P(E_k)+\frac{2v}{k}.
\]
Taking limits when $k\to\infty$ we get \ref{two}.

To prove \ref{three} we shall first show that
\begin{equation}
\label{eq:proof3}
P(E)=\liminf_{k\to\infty} P(E_k^c)
\end{equation}
reasoning by contradiction. Since $E_k^c$ converges in $L^1(M)$ to $E$, we may assume that the strict inequality $P(E)<\liminf_{k\to\infty} P(E_k)$ holds. Reasoning as above we obtain an non-decreasing and diverging sequence of radii $\{\rho_k\}_{k\in\nn}$ so that $\rho_k<r_k$ and
\[
P(E\cap\ptl B(\rho_k),\ptl B(\rho_k))\le\frac{v}{k},
\]
for all $k\in\nn$. Let $E_k':=E\cap B(\rho_k)$. The perimeter of $E_k'$ satisfies
\[
P(E_k')\le P(E,B(\rho_k))+P(E\cap\ptl B(\rho_k),\ptl B(\rho_k))\le P(E)+\frac{v}{k},
\]
and for the volume $|E_k'|$ we have
\[
\lim_{k\to\infty} |E'_k|=|E|=v-\lim_{k\to\infty} |E_k^d|.
\]
We fix two points $p_1\in\intt(E)$, $p_2\in\intt(M\setminus E)$, close enough to the boundary of $E$, so that we can apply the Deformation Lemma in a neighborhood of each point. This allows us to make small corrections of the volume and obtain, for $k\in\nn$ large enough, a set $E_k''$ of finite perimeter so that
\[
|E_k''|+|E_k^d|=v,
\]
and
\[
P(E_k'')\le P(E_k')+C\,\big||E_k'|-|E_k^d|\big|\le P(E)+\frac{v}{k}+C\,\big||E_k'|-|E_k^d|,
\]
so that
\[
\liminf_{k\to\infty} P(E_k'')\le P(E).
\]
Then $F_k:=E_k''\cup E_k^d$ is sequence of sets of volume $v$ with
\[
\liminf_{k\to\infty} P(F_k)\le P(E)+\liminf_{k\to\infty} P(E_k^d)<\liminf_{k\to\infty} (P(E_k^c)+P(E_k^d))=I_M(v),
\]
which clearly gives us a contradiction and proves \eqref{eq:proof3}. To complete the proof of \ref{three} we observe that we can replace the inferior limit in \eqref{eq:proof3} by the true limit of the sequence since every subsequence of a minimizing sequence is also minimizing.

To prove \ref{four} we consider a finite perimeter set $F$ with $|F|=|E|$ and $P(F)<P(E)$ and we reason as in the proof of \ref{three} with $F$ instead of $E$.

Let us finally see that \ref{five} holds. From \ref{two} and \ref{three} we see that $\lim_{k\to\infty}P(E_k^d)$ exists and it is equal to $I_M(v)-P(E)$. If this limit were smaller than $I_M(v-|E|)$ then we could slightly modify the sequence $\{E_k^d\}_{k\in\nn}$ to produce another one $\{F_k\}_{k\in\nn}$ with $|F_k|=v-|E|$ and $\lim_{k\to\infty} P(F_k)=\lim_{k\to\infty} P(E_k^d)<I_M(v-|E|)$, which gives a contradiction. If $\lim_{k\to\infty}P(E_k^d)$  were larger than $I_M(v-|E|)$ then we could find a set $F$ with $|F|=v-|E|$ so that
\[
I_M(v-|E|)<P(F)<\lim_{k\to\infty}P(E_k^d).
\]
Modifying again slightly the volume of $F$ we produce a sequence $\{F_k\}_{k\in\nn}$ so that $|E|+|F_k|=v$ and $\lim_{k\to\infty} P(F_k)=P(F)$. Since $E$ is bounded, we can translate the sets $F_k$ so that they are at positive distance from $E$. Hence
\[
\lim_{k\to\infty}P(E\cup F_k)=\lim_{k\to\infty} P(E)+P(F_k)=P(E)+P(F)<I_M(v),
\]
a contradiction that proves \ref{five}.
\end{proof}

\begin{remark}
The proof of the first three items in the statement of Proposition~\ref{prop:minseq} works in quite general metric measure spaces. The proof of the last two ones needs the compactness of the isoperimetric regions.
\end{remark}

\section{Proof of the main result}
\label{sec:main}

We shall prove in this section our main result

\begin{theorem}
\label{th:main}
Let $(M,g_\mathcal{H},\omega)$ be a contact sub-Riemannian manifold such that the quotient $M/\Isom_\omega(M,g)$ is compact. Then, for any $0<v<|M|$, there exists on $M$ an isoperimetric region of volume $v$.
\end{theorem}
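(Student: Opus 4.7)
The plan is to adapt Morgan's Riemannian existence scheme using the machinery developed in Sections 2--4: the small-volume isoperimetric inequality (Lemma \ref{lem:isopsmall}), the Deformation Lemma \ref{addingperimeter}, boundedness of isoperimetric regions (Lemma \ref{lem:bounded}), the structure theorem for minimizing sequences (Proposition \ref{prop:minseq}), and the Leonardi--Rigot concentration estimate (Lemma \ref{lem:lr}), together with the cocompactness hypothesis on $\Isom_\omega(M,g)$.

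Let $\{E_k\}_{k\in\nn}$ be a minimizing sequence of sets of volume $v$; $I_M(v)$ is finite by Ahlfors regularity \eqref{eq:ahlfors} (use a ball competitor of volume $v$), so $\sup_k P(E_k)<\infty$. First, by Lemma \ref{lem:lr}, a fixed fraction $\gamma\in(0,1)$ of the volume concentrates in a ball of bounded radius: there exist $r_0>0$ and points $p_k\in M$ with $|E_k\cap B(p_k,r_0)|\ge\gamma v$. Since $M/\Isom_\omega(M,g)$ is compact, pick a compact $K\subset M$ meeting every orbit and isometries $\phi_k\in\Isom_\omega(M,g)$ with $\phi_k(p_k)\in K$; replacing $E_k$ by $\phi_k(E_k)$ preserves $|E_k|$ and $P(E_k)$ and yields a new sequence with mass at least $\gamma v$ inside the fixed compact set $K_{r_0}:=\{x\in M:d(x,K)\le r_0\}$. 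By Proposition \ref{prop:compactness}, a subsequence converges in $L^1_{\mathrm{loc}}(M)$ to a finite perimeter set $E^{(0)}$ with $|E^{(0)}|\ge\gamma v>0$. Applying Proposition \ref{prop:minseq} to this subsequence, $E^{(0)}$ is isoperimetric for its own volume, bounded by Lemma \ref{lem:bounded}, and the diverging remainder $\{E_k^d\}$ satisfies $P(E_k^d)\to I_M(v-|E^{(0)}|)$.

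If $|E^{(0)}|=v$, we are done; otherwise, after small volume corrections via the Deformation Lemma \ref{addingperimeter}, $\{E_k^d\}$ is a minimizing sequence for the residual volume $V_1:=v-|E^{(0)}|\in(0,v)$. Iterating the previous step on $\{E_k^d\}$ produces a bounded isoperimetric region $E^{(1)}$ of volume $|E^{(1)}|\ge\gamma V_1$, which we translate by a suitable isometry of $\Isom_\omega(M,g)$ so that it lies at positive distance from $E^{(0)}$ (possible since both are bounded and $M$ is non-compact, so orbits escape any compact subset). Inductively we build $\{E^{(n)}\}_{n\in\nn}$ with residuals $V_n\le(1-\gamma)^n v\to 0$, each $E^{(n)}$ bounded, isoperimetric for its own volume, and at positive distance from all previously constructed pieces. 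Telescoping item (5) of Proposition \ref{prop:minseq} together with $I_M(V_n)\to 0$ (Lemma \ref{lem:isopsmall}) yields $I_M(v)=\sum_n P(E^{(n)})$. The set $E^*:=\bigsqcup_n E^{(n)}$ has volume $v$ and, by additivity of perimeter for sets at mutually positive distance, perimeter $\sum_n P(E^{(n)})=I_M(v)$; hence $E^*$ is the desired isoperimetric region of volume $v$.

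The main obstacle is the uniform concentration in the second paragraph: without Lemma \ref{lem:lr} and the cocompactness of $\Isom_\omega(M,g)$, the mass of $E_k$ could disperse indefinitely along the manifold, the candidate limit $E^{(0)}$ would be empty, and the inductive construction could not even begin. Everything else amounts to bookkeeping organized by Proposition \ref{prop:minseq}, whose own proof depends on the Deformation Lemma \ref{addingperimeter} --- the place at which the specifically contact sub-Riemannian structure enters essentially, via the foliation by translates of Pansu's spheres.
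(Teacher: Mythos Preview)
Your approach is essentially the paper's, and the architecture is right. There is, however, one genuine gap.

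You assert that Lemma~\ref{lem:lr} alone yields a \emph{fixed} fraction $\gamma\in(0,1)$, uniform across all iterations, so that $V_n\le(1-\gamma)^n v$. Lemma~\ref{lem:lr} by itself does not give this. Applied to a minimizing sequence for volume $V_n$, it says: if $|E\cap B(x,r_0)|<m$ for every $x$, then $P(E)\ge (C/m)^{1/Q}V_n$. To force concentration you must choose $m$ so that this lower bound exceeds (roughly) $I_M(V_n)$; the resulting concentrated mass is then of order $C\,V_n^{Q}/I_M(V_n)^{Q}$, and the \emph{fraction} of $V_n$ this represents is $C\,V_n^{Q-1}/I_M(V_n)^{Q}$. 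Bounding $I_M(V_n)$ only by $I_M(v)$ (which is all your ``$\sup_k P(E_k)<\infty$'' gives) makes this fraction degenerate as $V_n\to 0$. What is needed is the scale-invariant upper bound
\[
I_M(w)\le C(v)\,w^{(Q-1)/Q}\qquad\text{for all }w\in(0,v],
\]
which is exactly Proposition~\ref{prop:cv0} in the paper; with it the fraction becomes $\gamma=C/C(v)^{Q}$, independent of $n$, and your geometric decay $V_n\le(1-\gamma)^n v$ follows. The paper combines Lemma~\ref{lem:lr} with Proposition~\ref{prop:cv0} precisely for this reason.

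A related slip: you cite Lemma~\ref{lem:isopsmall} for $I_M(V_n)\to 0$, but that lemma is a \emph{lower} bound on perimeter, not an upper bound on the profile. The statement $I_M(V_n)\to 0$ again comes from Proposition~\ref{prop:cv0} (or directly from a ball competitor via Ahlfors regularity and Lemma~\ref{estimateperimeterinballs}). Once you insert Proposition~\ref{prop:cv0} in these two places, your argument matches the paper's proof; the paper also remarks at the end that boundedness of the final region (Lemma~\ref{lem:bounded}) forces the iteration to terminate after finitely many steps, so the countable disjoint union is actually finite.
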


First we need the following result \cite[Lemma~4.1]{Le-Ri}

\begin{lemma}
\label{lem:lr}
Let $E\subset M$ be a set with positive and finite perimeter and measure. Assume that $m\in (0,\inf_{x\in M} |B(x,r_0)|/2)$, where $r_0>0$ is the radius for which the relative isoperimetric inequality holds, is such that $|E\cap B(x,r_0)|<m$ for all $x\in M$. Then we have
\begin{equation}
\label{eq:lr}
C\,|E|^Q\le m\,P(E)^Q,
\end{equation}
for some constant $C>0$ that only depends on $Q$.
\end{lemma}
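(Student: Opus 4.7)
The plan is to follow essentially the same covering scheme as in the proof of Lemma~\ref{lem:isopsmall}, but exploit the pointwise smallness assumption $|E\cap B(x,r_0)|<m$ to interpolate between the $q$-power appearing in the relative isoperimetric inequality and the linear sum of volumes.

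First I would choose a maximal family $\{x_i\}_{i\in\nn}\subset M$ with $d(x_i,x_j)\ge r_0/2$ for $i\neq j$, so that maximality yields $E\subset\bigcup_i B(x_i,r_0/2)\subset\bigcup_i B(x_i,r_0)$, while the balls $B(x_i,r_0/4)$ are pairwise disjoint. As in the overlap argument of Lemma~\ref{lem:isopsmall}, the doubling property (which holds globally on $M$ once $M/\Isom_\omega(M,g)$ is compact) gives a uniform bound $\#\{i:z\in B(x_i,r_0)\}\le K$ for every $z\in M$, with $K$ depending only on the doubling constant.

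Next, for each index $i$, since $|E\cap B(x_i,r_0)|<m<|B(x_i,r_0)|/2$ we have $|E^c\cap B(x_i,r_0)|>|B(x_i,r_0)|/2>|E\cap B(x_i,r_0)|$, so Lemma~\ref{lem:relative} applies with the minimum realized by $|E\cap B(x_i,r_0)|$:
\begin{equation*}
C_I\,|E\cap B(x_i,r_0)|^{(Q-1)/Q}\le \per(E,B(x_i,r_0)).
\end{equation*}
Writing $|E\cap B(x_i,r_0)|=|E\cap B(x_i,r_0)|^{1/Q}\cdot |E\cap B(x_i,r_0)|^{(Q-1)/Q}$ and using $|E\cap B(x_i,r_0)|\le m$ gives the key interpolation
\begin{equation*}
|E\cap B(x_i,r_0)|\le m^{1/Q}\,|E\cap B(x_i,r_0)|^{(Q-1)/Q}\le \frac{m^{1/Q}}{C_I}\,\per(E,B(x_i,r_0)).
\end{equation*}

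Finally, summing over $i$, using $E\subset\bigcup B(x_i,r_0)$, and applying the bounded overlap estimate to $\sum_i P(E,B(x_i,r_0))\le K\,P(E)$, I obtain
\begin{equation*}
|E|\le\sum_{i}|E\cap B(x_i,r_0)|\le\frac{K\,m^{1/Q}}{C_I}\,\per(E).
\end{equation*}
Raising both sides to the $Q$-th power yields $|E|^Q\le (K/C_I)^Q\,m\,\per(E)^Q$, which is \eqref{eq:lr} with $C:=(C_I/K)^Q$.

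The only delicate point is securing the bounded overlap and the uniform radius $r_0$ across all of $M$; both follow from the compactness of $M/\Isom_\omega(M,g)$ (which makes the relative isoperimetric inequality and the doubling inequality hold with constants independent of the base point), so no genuine obstacle remains. The interpolation step itself is elementary once the bound $|E\cap B(x,r_0)|\le m$ is in hand.
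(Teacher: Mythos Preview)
Your proof is correct and follows essentially the same approach as the paper: a maximal $r_0/2$-separated family covering $E$, the interpolation $|E\cap B|\le m^{1/Q}|E\cap B|^{(Q-1)/Q}$, the relative isoperimetric inequality applied in each ball (using $|E\cap B(x_i,r_0)|<m<|B(x_i,r_0)|/2$), and the bounded-overlap estimate from doubling. The only cosmetic difference is that the paper restricts the maximal family to points with $|E\cap B(x,r_0/2)|>0$, which is immaterial since empty intersections contribute nothing to the sums.
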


\begin{proof}
We closely follow the proof of \cite[Lemma~4.1]{Le-Ri}. We consider a maximal family of points $\mathcal{A}$ in $M$ so that $d(x,x')\ge r_0/2$ for all $x$, $x'\in\mathcal{A}$, $x\neq x'$, and $|E\cap B(x,r_0/2)|>0$ for all $x\in\mathcal{A}$. Then $\bigcup_{x\in\mathcal{A}} B(x,r_0)$ cover almost all of $E$. We have
\begin{align*}
|E|&\le\sum_{x\in\mathcal{A}} |E\cap B(x,r_0)|\le m^{1/Q}\sum_{x\in\mathcal{A}} |E\cap B(x,r_0)|^q
\\
&\le m^{1/Q}C_I\sum_{x\in\mathcal{A}} P(E,B(x,r_0)),
\end{align*}
since $(1/Q)+q=1$ and $|E\cap B(x,r_0)|<m$. The last inequality follows from the relative isoperimetric inequality since $|E\cap B(x,r_0)|<m\le |B(x,r_0)|/2$ and so $\min\{|E\cap B(x,r_0)|,|E^c\cap B(x,r_0)|\}=|E\cap B(x,r_0)|$. The overlapping is controlled in the same way as in \cite{Le-Ri} to conclude the proof.
\end{proof}

Using the following result we can prove Proposition~\ref{prop:cv0}
\begin{lemma}[{\cite[Thm.~4.3]{Am}}]
\label{estimateperimeterinballs} 
The measure $P(E,\cdot)$ satisfies
\begin{equation*}
\tau< \liminf\limits_{\delta\rightarrow 0} \frac{P(E,B(x,\delta))}{\delta^{Q-1}}\leq \limsup\limits_{\delta\rightarrow 0} \frac{P(E,B(x,\delta))}{\delta^{Q-1}} <+\infty,
\end{equation*}
for $P(E,\cdot)$-a.e. $x\in M$, with $\tau>0$.
\end{lemma}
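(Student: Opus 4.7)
The strategy is to combine the relative isoperimetric inequality from Lemma~\ref{lem:relative}, the Ahlfors-regularity bound~\eqref{eq:ahlfors}, and classical density estimates for Radon measures on doubling metric measure spaces, in order to bound $P(E,B(x,\delta))/\delta^{Q-1}$ from above and below at $P(E,\cdot)$-almost every $x\in M$.

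First I would verify that $P(E,\cdot)$-almost every point lies in the reduced boundary $\ptl^{*}E$. At such a point $x$, the second defining property of $\ptl^{*}E$, combined with a blow-up argument adapted from the Euclidean theory, provides positive constants $\alpha,\beta\in(0,1)$ with
\[
\liminf_{\delta\to 0}\frac{|E\cap B(x,\delta)|}{|B(x,\delta)|}\ge\alpha\qquad\text{and}\qquad\liminf_{\delta\to 0}\frac{|E^{c}\cap B(x,\delta)|}{|B(x,\delta)|}\ge\beta.
\]
By the Ahlfors-regularity~\eqref{eq:ahlfors}, both $|E\cap B(x,\delta)|$ and $|E^{c}\cap B(x,\delta)|$ are then bounded below by a positive constant multiple of $\delta^{Q}$ for all sufficiently small $\delta$. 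Plugging these into Lemma~\ref{lem:relative} gives
\[
P(E,B(x,\delta))\ge C_{I}\min\{|E\cap B(x,\delta)|,|E^{c}\cap B(x,\delta)|\}^{(Q-1)/Q}\ge\tau\,\delta^{Q-1}
\]
for a positive $\tau$, which is the lower estimate.

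For the upper estimate I would differentiate the Radon measure $P(E,\cdot)$ against the natural $(Q-1)$-dimensional Hausdorff-type measure on $(M,d)$. Since $(M,d,dv_{g})$ satisfies the doubling property~\eqref{eq:doubling}, a standard Vitali covering argument guarantees that for $P(E,\cdot)$-almost every~$x$ the upper density $\limsup_{\delta\to 0}P(E,B(x,\delta))/\delta^{Q-1}$ is finite. The main obstacle I expect is justifying the blow-up density estimates at points of $\ptl^{*}E$ in the metric setting, where the strong Euclidean rectifiability theory is not directly available; here one uses crucially that $M$ supports a $1$-Poincar\'e inequality (Lemma~\ref{lem:poincareisom}) together with the relative isoperimetric inequality, which is exactly the framework employed in~\cite[Thm.~4.3]{Am}. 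Once we observe that our contact sub-Riemannian setting fits into that framework, the stated double-sided density bound follows.
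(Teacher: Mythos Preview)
The paper does not supply its own proof of this lemma; it is quoted directly from \cite[Thm.~4.3]{Am} and used as a black box in the proof of Proposition~\ref{prop:cv0}. Your concluding observation---that the contact sub-Riemannian manifold is Ahlfors-regular~\eqref{eq:ahlfors}, doubling~\eqref{eq:doubling}, and supports a $1$-Poincar\'e inequality (Lemma~\ref{lem:poincareisom}), hence falls within the hypotheses of \cite{Am}---is exactly the paper's implicit stance, so in that sense your proposal and the paper agree.

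One remark on the sketch itself: routing the lower bound through membership in $\partial^{*}E$ and a blow-up is somewhat circular in the metric setting. In \cite{Am} the density bounds on $P(E,B(x,\delta))/\delta^{Q-1}$ are proved \emph{prior to} (and are an ingredient in) the structure theory of the reduced boundary, not deduced from it. The more direct argument first shows, via a Lebesgue-differentiation-type statement for the perimeter measure, that at $P(E,\cdot)$-a.e.\ $x$ both $|E\cap B(x,\delta)|$ and $|E^{c}\cap B(x,\delta)|$ are comparable to $\delta^{Q}$, and then invokes Lemma~\ref{lem:relative}; the upper bound is, as you say, a standard covering/differentiation argument for the finite Radon measure $P(E,\cdot)$.
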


\begin{proposition}
\label{prop:cv0}
Given $v_0>0$, there exists a constant $C(v_0)>0$ so that
\begin{equation}
\label{eq:upperI}
I_M(v)\leq C(v_0)\,v^{(Q-1)/Q},
\end{equation}
for all $v\in (0,v_0]$.
\end{proposition}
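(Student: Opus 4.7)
The strategy is to exhibit explicit test sets for each volume $v\in(0,v_0]$ whose perimeter is controlled by $C(v_0)\,v^{q}$, using as models the enclosed regions of the Pansu spheres $\sph_\la$ already introduced in Lemma~\ref{lem:foliation} and transplanted to $M$ via a Darboux chart.

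Fix a point $p_0\in M$ and a Darboux chart $\phi\colon U\to\phi(U)\subset\rr^{2n+1}$ with $\phi(p_0)=0$. For each $\la>0$ let $D_\la$ be the open region bounded in $\rr^{2n+1}$ by the Pansu sphere $\sph_\la$ of \eqref{eq:ulambda} together with its reflection across $\{t=0\}$. By \eqref{eq:ulambda}, $\overline{D_\la}\subset\{|z|\le 1/\la,\,|t|\le\pi/(4\la^2)\}$, so in the standard Heisenberg structure its volume and perimeter scale as $\la^{-Q}$ and $\la^{-(Q-1)}$ respectively (equivalently, $h_{1/\la}(\sph_1)=\sph_\la$ together with the known scaling of the Heisenberg dilations under $h_\mu$). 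Set $E_\la:=\phi^{-1}(D_\la)$. For $\la\ge\la_0$ large enough, $\overline{D_\la}$ lies in a fixed compact subset $K\subset\phi(U)$, on which the pulled-back sub-Riemannian metric $g_{\mathcal{H}_0}$ is comparable to the standard Heisenberg metric on the common distribution $\ker(\omega_0)$, and the Riemannian volume density is comparable to Lebesgue measure, with constants depending only on $K$. Therefore
\[
c_1\,\la^{-Q}\le|E_\la|\le c_2\,\la^{-Q},\qquad P(E_\la)\le c_3\,\la^{-(Q-1)},
\]
and hence $P(E_\la)\le C_1\,|E_\la|^{q}$ with $C_1$ independent of $\la\ge\la_0$.

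The map $\la\mapsto v(\la):=|E_\la|$ is continuous on $[\la_0,\infty)$, decreases to $0$ as $\la\to\infty$, and attains a positive maximum $v_1:=v(\la_0)$. By the intermediate value theorem, every $v\in(0,v_1]$ equals $v(\la)$ for some $\la\ge\la_0$, so $I_M(v)\le P(E_\la)\le C_1\,v^{q}$, and we are done with $C(v_0):=C_1$ provided $v_0\le v_1$. If $v_0>v_1$, for $v\in(v_1,v_0]$ pick $k:=\lceil v/v_1\rceil\le\lceil v_0/v_1\rceil$ and take $k$ pairwise disjoint isometric copies of $E_{\la(v/k)}$ as the competitor; enough room for these copies is provided by compactness of $M/\Isom_\omega(M,g)$ (if $M$ is non-compact, push copies along a sequence of isometries escaping to infinity; if $M$ is compact, then $v_0<|M|$ and the finitely many needed copies can be packed into disjoint translates of the chart $U$). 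The resulting set has volume $v$ and perimeter at most $k\,C_1\,(v/k)^{q}=C_1\,k^{1-q}\,v^{q}\le C_1\,\lceil v_0/v_1\rceil^{1-q}\,v^{q}$, yielding \eqref{eq:upperI} with $C(v_0):=C_1\,\lceil v_0/v_1\rceil^{1-q}$.

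The main obstacle is the uniform comparability in the second step: one must check that the sub-Riemannian volume and perimeter of $E_\la$ in $M$ stay within fixed multiplicative constants of the corresponding Heisenberg quantities for $D_\la$, uniformly in $\la\ge\la_0$. This rests on the observation that $g_{\mathcal{H}_0}$ and the standard Heisenberg metric are two smooth positive definite metrics on the common distribution $\ker(\omega_0)$, hence equivalent on any compact subset of $\phi(U)$; the constraint $\la\ge\la_0$ is exactly what confines all the $E_\la$ to such a compact set.
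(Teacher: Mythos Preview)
Your proof is correct and takes a genuinely different route from the paper. The paper uses Carnot--Carath\'eodory balls $B(x,r)$ as competitors: the lower Ahlfors bound $|B(x,r)|\ge Cr^Q$ gives $r^{Q-1}\le C^{-(Q-1)/Q}|B(x,r)|^{(Q-1)/Q}$, while the perimeter bound $P(B(x,r))\le cr^{Q-1}$ is read off from Lemma~\ref{estimateperimeterinballs}; chaining these yields $I_M(|B(x,r)|)\le C'|B(x,r)|^{(Q-1)/Q}$ in one line. You instead transplant the Pansu bubbles $D_\la$ through a fixed Darboux chart and combine the exact Heisenberg dilation scaling with the comparability of two smooth horizontal metrics on a compact piece of the chart. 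Your argument is longer but more self-contained: it avoids the asymptotic density estimate of \cite{Am} and reuses the foliation already built in Lemma~\ref{lem:foliation}. The paper's argument, by contrast, is essentially a direct appeal to Ahlfors regularity together with a cited perimeter bound for metric balls.

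One point to tighten: in the range $v\in(v_1,v_0]$ you pack $k$ disjoint ``translates'' of $E_{\la(v/k)}$. When $M$ is non-compact with cocompact isometry group this is unproblematic, but if $M$ itself is compact the isometry group may be trivial, and then there are no nontrivial translates of the chart $U$ to use. The easy repair is to place the $k$ bubbles in Darboux charts centered at $k$ distinct points of $M$; the comparison constants $c_1,c_2,c_3$ then depend on the chart, but since $k\le\lceil v_0/v_1\rceil$ is bounded independently of $v$, the worst of these finitely many constants can be absorbed into $C(v_0)$.
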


\begin{proof} For any $x\in M$ we have
\begin{equation*}
I_M(|B(x,r)|)\leq P(B(x,r))\leq c r^{Q-1}\leq \frac{c}{C^{(Q-1)/Q}}|B(x,r)|^{(Q-1)/Q},
\end{equation*}
where we have used $|B(x,r)|\geq C r^Q$ to get $r^Q\leq C^{-1/Q}|B(x,r)|^{1/Q}$ and Lemma~\ref{estimateperimeterinballs} with $E=B(x,r)$ and $\delta=2r$.

\end{proof}

\begin{proof}[Proof of Theorem~\ref{th:main}]
We fix a volume $0<v<|M|$, and we consider a minimizing sequence $\{E_k\}_{k\in\nn}$ of sets of volume $v$ whose perimeters approach $I_M(v)$. In case $M$ is compact, we can extract a convergent subsequence to a finite perimeter set $E$ with $|E|=v$ and $\per(E)=I_M(v)$. 

We assume from now on that $M$ is not compact. By Lemma~\ref{lem:lr}, for any $m>0$ such that $mv<\inf_{x\in M} |B(x,r_0)|/2$, there is a a constant $C>0$, only depending on $Q$, so that, for any finite perimeter set $E\subset M$ satisfying $|E\cap B(x,r_0)|<m\,|E|$ for all $x\in M$, we have  
\begin{equation*}
C|E|^Q\le (m|E|)\,P(E)^Q,
\end{equation*}
and so
\begin{equation}
\label{eq:conslr}
P(E)\ge \bigg(\frac{C}{m}\bigg)^{1/Q}\,|E|^{(Q-1)/Q}.
\end{equation}
From Proposition~\ref{prop:cv0} we deduce that, given $v>0$, there is a constant $C(v)>0$ so that $I_M(w)\le C(v)\,w^{(Q-1)/Q}$ for all $w\in (0,v]$. Taking $m_0>0$ small enough so that
\begin{equation}
\label{eq:m0small}
\bigg(\frac{C}{m_0}\bigg)^{1/Q}|E|^{(Q-1)/Q}>2 C(v)\,|E|^{(Q-1)/Q}
\end{equation}
we conclude, using \eqref{eq:conslr}, \eqref{eq:m0small} and \eqref{eq:upperI}
\begin{equation}
\label{eq:cont}
P(E)\ge 2\,I_M(|E|).
\end{equation}
We conclude from \eqref{eq:cont} that, for $k$ large enough, the sets in the minimizing sequence $\{E_k\}_{k\in\nn}$ cannot satisfy the property $|E\cap B(x,r_0)|<m|E|$ for all $x\in M$. So we can take  points $x_k\in M$ such that
\begin{equation*}
|E_k\cap B(x_k,r_0)|\ge m_0 |E_k|=m_0v,
\end{equation*}
for $k$ large enough. Since $M/\text{Isom}_\omega(M,g)$ is compact, we translate the whole minimizing sequence (and still denote it in the same way), so that $\{x_k\}_{k\in\nn}$ is bounded. By passing to a subsequence, denoted in the same way, we assume that $\{x_k\}_{k\in\nn}$ converges to some point $x_0\in M$. By the compactness Lemma there is a convergent subsequence, still denoted by $\{E_k\}_{k\in\nn}$ that converges to some finite perimeter set $E$, and
\[
m_0 v\le\liminf_{k\to\infty} |E_k\cap B(x_0,r_0)|=|E\cap B(x_0,r_0)|,
\]
and
\[
|E|\le\liminf_{k\to\infty}|E_k|=v.
\]
So we have proven the following fact: from every minimizing sequence of sets of volume $v>0$, one can produce, suitably applying isometries of $M$ to each member of the sequence, a new minimizing sequence $\{E_k\}_{k\in\nn}$ that converges to some finite perimeter set $E$ with $m_0v\le |E|\le v$, where $m_0>0$ is a universal constant that only depends on $v$. Hence a fraction of the total volume is captured by the minimizing sequence.

Now take a minimizing sequence $\{E_k\}_{k\in\nn}$ that converges to some finite perimeter set $E$ of volume $m_0v\le |E|<v$. The set $E$ is isoperimetric for volume $|E|$ and hence bounded by Lemma~\ref{lem:bounded}. By Proposition~\ref{prop:minseq}, the sequence $\{E_k\}_{k\in\nn}$ can be replaced by another minimizing sequence $\{E_k^c\cup E_k^d\}_{k\in\nn}$ so that $E_k^c\to E$ and $E_k^d$ diverges. Moreover, $\{E_k^d\}_{k\in\nn}$ is minimizing for volume $v-|E|$. Hence one obtains
\[
I_M(|E|)+I_M(v-|E|)=I_M(v).
\]

If $|E|=v$ we are done since $P(E)\le\liminf_{k\to\infty} P(E_k)=I_M(|E|)$ and hence $E$ is an isoperimetric region. So assume that $|E|<v$ and observe that $|E|\ge m_0\,v$. It is clear that $E$ is an isoperimetric region of volume $|E|$. The minimizing sequence can be broken into two pieces: one of them converging to $E$ and the other one diverging. The diverging part is a minimizing sequence for volume $v-|E|$. We let $F_0:=E$.

Now we apply again the previous arguments to the diverging part of the sequence, which is minimizing for volume $v-|E|$. We translate the sets to capture part of the volume and we get a new isoperimetric region $F_1$ with volume
\[
v-|F_0|\ge |F_1|\ge m_0(v-|F_0|),
\]
and a new diverging minimizing sequence for volume $v-|F_0|-|F_1|$. By induction we get a sequence of isoperimetric regions $\{F_k\}_{k\in\nn}$ so that the volume of $F_k$ satisfies
\[
|F_k|\ge m_0\bigg(v-\sum_{i=0}^{k-1}|F_i|\bigg).
\]
Hence we have
\[
\sum_{i=0}^k |F_i|\ge (k+1)m_0 v-km_0\sum_{i=0}^{k-1} |F_i|
\ge (k+1)m_0 v-km_0\sum_{i=0}^{k} |F_i|,
\]
and so
\[
\sum_{i=0}^k |F_i|\ge \frac{(k+1)m_0v}{1+km_0}.
\]
Taking limits when $k\to\infty$ we get
\[
\lim_{k\to\infty}\sum_{i=0}^k |F_i|=v.
\]
Moreover,
\[
\sum_{i=0}^\infty P(F_i)=I_M(v).
\]
Each region $F_i$ is bounded, so that we can place them in $M$ using the isometry group so that they are at positive distance (each one contained in an annulus centered at some given point). Hence $F:=\bigcup_{i=0}^\infty F_i$ is an isoperimetric region of volume $v$. In fact, $F$ must be bounded by Lemma~\ref{lem:bounded}, so we only need a finite number of steps to recover all the volume.

%
%
\end{proof}

\bibliography{pseudo-hermitian}

\def\cprime{$'$}
\providecommand{\bysame}{\leavevmode\hbox to3em{\hrulefill}\thinspace}
\providecommand{\MR}{\relax\ifhmode\unskip\space\fi MR }
\providecommand{\MRhref}[2]{%
  \href{http://www.ams.org/mathscinet-getitem?mr=#1}{#2}
}
\providecommand{\href}[2]{#2}
\begin{thebibliography}{10}

\bibitem{MR0420406}
F.~J. Almgren, Jr., \emph{Existence and regularity almost everywhere of
  solutions to elliptic variational problems with constraints}, Mem. Amer.
  Math. Soc. \textbf{4} (1976), no.~165, viii+199. \MR{0420406 (54 \#8420)}

\bibitem{Am1}
Luigi Ambrosio, \emph{Some fine properties of sets of finite perimeter in
  {A}hlfors regular metric measure spaces}, Adv. Math. \textbf{159} (2001),
  no.~1, 51--67. \MR{MR1823840 (2002b:31002)}

\bibitem{Am}
\bysame, \emph{Fine properties of sets of finite perimeter in doubling metric
  measure spaces}, Set-Valued Anal. \textbf{10} (2002), no.~2-3, 111--128,
  Calculus of variations, nonsmooth analysis and related topics. \MR{MR1926376
  (2003i:28002)}

\bibitem{MR690651}
Pierre B{\'e}rard and Daniel Meyer, \emph{In\'egalit\'es isop\'erim\'etriques
  et applications}, Ann. Sci. \'Ecole Norm. Sup. (4) \textbf{15} (1982), no.~3,
  513--541. \MR{690651 (84h:58147)}

\bibitem{Bl}
David~E. Blair, \emph{Riemannian geometry of contact and symplectic manifolds},
  Progress in Mathematics, vol. 203, Birkh\"auser Boston Inc., Boston, MA,
  2002. \MR{MR1874240 (2002m:53120)}

\bibitem{CDG}
Luca Capogna, Donatella Danielli, and Nicola Garofalo, \emph{An isoperimetric
  inequality and the geometric {S}obolev embedding for vector fields}, Math.
  Res. Lett. \textbf{1} (1994), no.~2, 263--268. \MR{MR1266765 (95a:46048)}

\bibitem{CDPT}
Luca Capogna, Donatella Danielli, Scott~D. Pauls, and Jeremy~T. Tyson, \emph{An
  introduction to the {H}eisenberg group and the sub-{R}iemannian isoperimetric
  problem}, Progress in Mathematics, vol. 259, Birkh\"auser Verlag, Basel,
  2007. \MR{MR2312336}

\bibitem{MR2548248}
Sagun Chanillo and Paul~C. Yang, \emph{Isoperimetric inequalities \& volume
  comparison theorems on {CR} manifolds}, Ann. Sc. Norm. Super. Pisa Cl. Sci.
  (5) \textbf{8} (2009), no.~2, 279--307. \MR{2548248}

\bibitem{CHY}
Jih-Hsin Cheng, Jenn-Fang Hwang, and Paul Yang, \emph{Existence and uniqueness
  for {$p$}-area minimizers in the {H}eisenberg group}, Math. Ann. \textbf{337}
  (2007), no.~2, 253--293. \MR{MR2262784 (2009h:35120)}

\bibitem{EG}
Lawrence~C. Evans and Ronald~F. Gariepy, \emph{Measure theory and fine
  properties of functions}, Studies in Advanced Mathematics, CRC Press, Boca
  Raton, FL, 1992. \MR{MR1158660 (93f:28001)}

\bibitem{FSSC5}
Bruno Franchi, Raul Serapioni, and Francesco Serra~Cassano,
  \emph{Meyers-{S}errin type theorems and relaxation of variational integrals
  depending on vector fields}, Houston J. Math. \textbf{22} (1996), no.~4,
  859--890. \MR{MR1437714 (98c:49037)}

\bibitem{FSSC4}
\bysame, \emph{Rectifiability and perimeter in the {H}eisenberg group}, Math.
  Ann. \textbf{321} (2001), no.~3, 479--531. \MR{MR1871966 (2003g:49062)}

\bibitem{Ga-Nh}
Nicola Garofalo and Duy-Minh Nhieu, \emph{Isoperimetric and {S}obolev
  inequalities for {C}arnot-{C}arath\'eodory spaces and the existence of
  minimal surfaces}, Comm. Pure Appl. Math. \textbf{49} (1996), no.~10,
  1081--1144. \MR{1404326 (97i:58032)}

\bibitem{Gi}
Enrico Giusti, \emph{Minimal surfaces and functions of bounded variation},
  Monographs in Mathematics, vol.~80, Birkh\"auser Verlag, Basel, 1984.
  \MR{MR775682 (87a:58041)}

\bibitem{gromov-cc}
M.~Gromov, \emph{Carnot-{C}arath\'eodory spaces seen from within},
  Sub-Riemannian geometry, Progr. Math., vol. 144, Birkh\"auser, Basel, 1996,
  pp.~79--323. \MR{MR1421823 (2000f:53034)}

\bibitem{gromov-ms}
\bysame, \emph{Metric structures for {R}iemannian and non-{R}iemannian spaces},
  Progress in Mathematics, vol. 152, Birkh\"auser Boston Inc., Boston, MA,
  1999, Based on the 1981 French original [ MR0682063 (85e:53051)], With
  appendices by M.\ Katz, P.\ Pansu and S.\ Semmes, Translated from the French
  by Sean Michael Bates. \MR{MR1699320 (2000d:53065)}

\bibitem{HK}
Piotr Haj{\l}asz and Pekka Koskela, \emph{Sobolev met {P}oincar\'e}, Mem. Amer.
  Math. Soc. \textbf{145} (2000), no.~688, x+101. \MR{MR1683160 (2000j:46063)}

\bibitem{Je}
David Jerison, \emph{The {P}oincar\'e inequality for vector fields satisfying
  {H}\"ormander's condition}, Duke Math. J. \textbf{53} (1986), no.~2,
  503--523. \MR{MR850547 (87i:35027)}

\bibitem{Le-Ri}
G.~P. Leonardi and S.~Rigot, \emph{Isoperimetric sets on {C}arnot groups},
  Houston J. Math. \textbf{29} (2003), no.~3, 609--637 (electronic).
  \MR{MR2000099 (2004d:28008)}

\bibitem{Mi}
Michele Miranda, Jr., \emph{Functions of bounded variation on ``good'' metric
  spaces}, J. Math. Pures Appl. (9) \textbf{82} (2003), no.~8, 975--1004.
  \MR{MR2005202 (2004k:46038)}

\bibitem{Mon}
Richard Montgomery, \emph{A tour of subriemannian geometries, their geodesics
  and applications}, Mathematical Surveys and Monographs, vol.~91, American
  Mathematical Society, Providence, RI, 2002. \MR{MR1867362 (2002m:53045)}

\bibitem{Mo2}
Frank Morgan, \emph{Clusters minimizing area plus length of singular curves},
  Math. Ann. \textbf{299} (1994), no.~4, 697--714. \MR{MR1286892 (95g:49083)}

\bibitem{Mo}
\bysame, \emph{Geometric measure theory}, fourth ed., Elsevier/Academic Press,
  Amsterdam, 2009, A beginner's guide. \MR{2455580 (2009i:49001)}

\bibitem{NSW}
Alexander Nagel, Elias~M. Stein, and Stephen Wainger, \emph{Balls and metrics
  defined by vector fields. {I}. {B}asic properties}, Acta Math. \textbf{155}
  (1985), no.~1-2, 103--147. \MR{MR793239 (86k:46049)}

\bibitem{MR829003}
P.~Pansu, \emph{An isoperimetric inequality on the {H}eisenberg group}, Rend.
  Sem. Mat. Univ. Politec. Torino (1983), no.~Special Issue, 159--174 (1984),
  Conference on differential geometry on homogeneous spaces (Turin, 1983).
  \MR{829003 (87e:53070)}

\bibitem{MR676380}
Pierre Pansu, \emph{Une in\'egalit\'e isop\'erim\'etrique sur le groupe de
  {H}eisenberg}, C. R. Acad. Sci. Paris S\'er. I Math. \textbf{295} (1982),
  no.~2, 127--130. \MR{676380 (85b:53044)}

\bibitem{MR1883725}
Manuel Ritor{\'e}, \emph{Constant geodesic curvature curves and isoperimetric
  domains in rotationally symmetric surfaces}, Comm. Anal. Geom. \textbf{9}
  (2001), no.~5, 1093--1138. \MR{MR1883725 (2003a:53018)}

\bibitem{MR1857855}
\bysame, \emph{The isoperimetric problem in complete surfaces of nonnegative
  curvature}, J. Geom. Anal. \textbf{11} (2001), no.~3, 509--517. \MR{MR1857855
  (2002f:53109)}

\bibitem{Ri2}
\bysame, \emph{A proof by calibration of an isoperimetric inequality in the
  heisenberg group $\hh^n$}, arXiv:0803.1313 (2008).

\bibitem{Ri-Ro2}
Manuel Ritor{\'e} and C{\'e}sar Rosales, \emph{Existence and characterization
  of regions minimizing perimeter under a volume constraint inside {E}uclidean
  cones}, Trans. Amer. Math. Soc. \textbf{356} (2004), no.~11, 4601--4622
  (electronic). \MR{MR2067135 (2005g:49076)}

\bibitem{Ri-Ro3}
\bysame, \emph{Rotationally invariant hypersurfaces with constant mean
  curvature in the {H}eisenberg group {$\Bbb H^n$}}, J. Geom. Anal. \textbf{16}
  (2006), no.~4, 703--720. \MR{MR2271950 (2008a:53026)}

\bibitem{Si}
Leon Simon, \emph{Lectures on geometric measure theory}, Proceedings of the
  Centre for Mathematical Analysis, Australian National University, vol.~3,
  Australian National University Centre for Mathematical Analysis, Canberra,
  1983. \MR{MR756417 (87a:49001)}

\bibitem{MR1000553}
Shukichi Tanno, \emph{Variational problems on contact {R}iemannian manifolds},
  Trans. Amer. Math. Soc. \textbf{314} (1989), no.~1, 349--379. \MR{1000553
  (90f:53071)}

\end{thebibliography}

\end{document}